\newtheorem{theorem}{Theorem}
\newtheorem{definition}{Definition}
\newtheorem{remark}{Remark}
\newtheorem{lemma}{Lemma}
\newtheorem{proposition}{Proposition}
\newtheorem*{fundlemma}{Fundamental Lemma}
\newtheorem*{example}{Example 1}
\newtheorem*{example1}{Example 2 (Relay Systems)}
\newcommand{\R}{\mathbb{R}}
\newcommand{\N}{\mathbb{N}}
\newcommand{\s}{\Sigma}
\newcommand{\g}{\Gamma}
\newcommand{\M}{M}
\journal{Journal de Mathématiques Pures et Appliquées}
\begin{document}

\begin{frontmatter}

 \title{Limit sets of discontinuous vector fields on two-dimensional manifolds}
\author{Rodrigo D. Euz\'ebio\fnref{1}}
\ead{euzebio@ufg.br}
\fntext[1]{corresponding author}
\author{Joaby de S. Jucá}
\ead{joabyjuca.school@gmail.com}
\address{Departamento de Matemática, IME-UFG, R. Jacarandá,\\
	Campus Samambaia, Zip Code 74001-970, Goiânia, GO, Brazil}

\begin{abstract}
In this paper the asymptotic behavior of trajectories of discontinuous vector fields is studied. The vector fields are defined on a two-dimensional Riemannian manifold $M$ and the confinement of trajectories on some suitable compact set $K$ of $M$ is assumed. The behavior of the global trajectories is fully analyzed and their limit sets are classified. The presence of limit sets having non-empty interior is observed. Moreover, the existence of the so called sliding motion is allowed on $M$. The results contemplate a list of possible limit sets as well the existence of non-recurrent dynamics and the presence of nondeterministic chaos. Some examples and classes of systems fitting the hypotheses of the main theorems are also provided in the paper.
\end{abstract}
\begin{keyword}
Limit Sets \sep Discontinuous vector fields \sep Minimal sets\sep Chaotic flows
\MSC 34A12 \sep 34A30 \sep 34A36 \sep 34C05 \sep 34C23 \sep 34D30
\end{keyword}
\end{frontmatter}

\begin{multicols}{2}
\section{Introduction}

The theory of discontinuous vector fields (DVF for short) addresses the study of trajectories which eventually looses smoothness by reaching so\-me boundary region of the phase portrait. Generally that boundary is taken as a co-dimension one manifold therefore DVF have a close connection to smooth vector fields defined on manifolds with boundaries. Although the last can somehow be seen as a particular case of the former, several results dealing with the boundary contact have inspired and motivated the development of DVF. More specifically, this new theory have been widely studied throughout the last three decades thanks to the pioneering work of A. F. Filippov \cite{Filippov88}, which developed a schematic study of DVF. It have received special attention also due to a strong connection with applications in areas as electronics, mechanics, control theory, economy, biology, medicine, among others (see for instance \cite{diBernardo08,ThulCoombes10,TonnelierGerstner03,Utkin08} and references therein).

The dynamic of a DVF is generally more complicated than a smooth vector field because there are several ingredients of the vector fields playing some role. In fact, to fix ideas, even in the simplest case of a DVF formed by two vector field whose trajectories are separated by a common frontier, one must consider not only the dynamic of each particular vector field but also their interaction to the single common boundary. Moreover, the trajectories of such vector fields eventually slide on the boundary by an amalgamation processes occurring at the moment of the collision which generates new trajectories. For that reason the first steps toward the construction of a consistent theory of DVF require the establishing and validation of new results analogous to classical ones. In this direction we highlight some of those results already established into the discontinuous context in the areas of
Stability \cite{BrouckePughSimic01},
Chaos \cite{Buzzi-Carvalho-Euzebio18,Colombo2011}, 
Bifurcation \cite{GuardiaSearaTeixeira11,KuznetsovRinaldiGragnani03},
Closing Lemma \cite{DeCarvalho2016} and the Peixoto's Theorem \cite{GomideTeixeira}. 

A particular gap to a whole comprehension of DVF is the lack of global results. Effectively in many cases local aspects must be assumed so local results as the submersion theorem simplify coordinates and consequently calculations are generally more treatable. In our context, however, no local aspects are required but only contact conditions on a compact portion $K$ of the phase portrait, that is, the approach is {\it semi-local}. More specifically we consider DVF defined on a two-dimensional Riemannian manifold $\M$ which is locally split into some regions, each region being delimited by the connected components of a regular co-dimension one manifold $\s$. Some conditions are imposed to the compact $K$. For instance, we shall assume that it contains finite critical elements and intersects only one connected component of $\s$. We also assume that $K$ is positive invariant for some trajectory in the sense of the Poincaré-Bendixson Theorem for smooth vector fields.


The goal of this paper is to study asymptotic aspects of a {\it maximal} trajectory of a DVF. More precisely we are interested in obtaining the limit sets of such trajectories. The main result of the paper provides a fine classification of those objects for the class of DVF we are dealing with. We highlight that the obtained limit sets may present chaotic behavior. A preliminary study of limit sets can be found for the plane in \cite{Buzzi-Carvalho-Euzebio18}. In such paper the authors allow discontinuities but they do not consider sliding motion, obtaining then the non-generic situation where tangency points coincide.


This paper is organized as follows. In Section \ref{secao_preliminares} we present the fundamental notions of DVF (Subsection \ref{PSVF}) and we discuss some extensions of discontinuous vector fields for transitions between sliding and escaping regions (Subsection \ref{extension}). In Section \ref{heart} we pre\-sent the main results and a brief discussion of them. In Section \ref{global-analysis} we analyze the behavior of a maximal trajectory contained on a compact set $K$. We also establish auxiliaries results concerning pseudo-cycles (Subsection \ref{pseudo-cycles}), mild pseudo-cycles and chaotic sets (Subsections \ref{mild-pc_and_chaotics} and \ref{chaotic}) and pseudo-graphs (Subsection \ref{chaotic}). In Sections \ref{proof} we prove the main results of the paper and Section \ref{linear} provides some features of discontinuous linear vector fields. Finally in Section \ref{final} we present some conclusions on the main achievements of the paper.


\section{Preliminaries}\label{secao_preliminares}

\subsection{Discontinuous vector fields}\label{PSVF}

Let $M$ be a smooth two-dimensional Riemannian manifold. From Nash Embedding Theorem we can assume that $M$ is isometrically embedded into some Euclidean space $\R^n$. Suppose that $M$ admits a smooth function $f:M\to\R$ having $0\in\R$ as regular value in such way that $\Sigma = f^{-1}(0)$ 
splits $M$ into two connected disjoint regions 
$\Sigma^-=\{p\in M; f(p)< 0)\}$ and $\Sigma^+=\{p\in M; f(p)> 0)\}$. We call $\s$ the switching manifold and we consider the discontinuous vector fields $Z=(X,Y)$ on $M$ defined by 
\begin{equation}\label{general_system}
Z(p)=\left\{\begin{array}{lll}
X(p),\ \ \ \mbox{if } \ f(p)\geq 0,\\
Y(p),\ \ \ \mbox{if } \ f(p)\leq 0,
\end{array}
\right.
\end{equation}
where $X$ and $Y$ are smooth vector fields defined on  $M$. As we will see the number of regions of $M$ split by $\s$ may be higher than two but it is sufficient for our purposes (cf. Remark \ref{zones-number}).\\
Now, in order to study the behavior of trajectories of DVF on $\Sigma$ we must introduce some notations.
Indeed, given a vector field $X$ on $M$ and a point $p\in\s$ we consider the Lie derivatives $X. f(p) = \langle X(p),\nabla f(p)\rangle$ and $X^i. f(p)=\langle \nabla X^{i-1}. f(p),X(p)\rangle,\ i\geq 2$, where $\langle\cdot , \cdot\rangle$ is the canonical inner product on $\R^n$ on which $M$ is embedded.

On $\Sigma$ we distinguish three regions satisfying $(X.f(p))\cdot (Y.f(p))\neq 0$, that is, transversely to $\s$.
i) The \textit{sewing region} $\Sigma^c$ formed by the points $p\in\Sigma$ such that the trajectory of $X$ (resp. $Y$) meet $p$ in finite future time and the trajectory of $Y$ (resp. $X$) meets $p$ in finite past time. ii) The \textit{escaping region} $\Sigma^e$ formed by the points $p\in\Sigma$ such that the trajectories of $X$ and $Y$ meets $p$ in finite past time. Lastly iii) the \textit{sliding region} $\Sigma^s$ formed by the points $p\in\Sigma$ such that the trajectories of $X$ and $Y$ meets $p$ in finite future time.

The points $p\in\Sigma$ such that $X. f(p) = 0$ (resp. $Y. f(p) = 0$) are called tangency points of $X$ (resp. $Y$). They are denoted by $\s^t$.
We say that a tangency point $p$ has contact of order $n\in\N$ if the Lie derivatives $X^k.f(p)$ vanish for $k<n$ and 
$X^n.f(p)\neq 0$. We also classify tangency points according to the following: let $X$ and $Y$ be two smooth vector fields on $M$ and suppose that $\nabla f(p)$ points to the interior of the region $\s^+$ where $Z$ coincides with $X$. We say that $p\in\Sigma$ is an {\it invisible tangency} if the contact order $r$ of a trajectory of $X$ (resp. $Y$) passing through $p$ is even and $X^{r}.f(p)<0$ (resp. $Y^{r}.f(p)>0$). On the other hand, we say that $p\in\Sigma$ is a {\it visible tangency} if the contact order $r$ of a trajectory of $X$ (resp. $Y$) passing through $p$ is even and $X^{r}.f(p)>0$ (resp. $Y^{r}.f(p)<0$) or it is a tangent point with odd contact for $X$ (resp. $Y$).

For our purposes it is also interesting to consider a special configuration of tangency point, namely the case where the vector fields share such a point. We call that points {\it double tangency points}. For a DVF $Z=(X,Y)$ we say that a double tangency is \textit{elliptical} if it is an invisible tangency for both $X$ and $Y$, \textit{parabolic} if it is a visible tangency of even order for a vector field and invisible one for the other and \textit{hyperbolic} if the double tangency is visible of even order for both $X$ and $Y$. In this paper we also refer to elliptical tangency points between two sewing regions by tangency point of type I (see Figure \ref{fig1-11}). On the other hand, let $p\in\Sigma$ an invisible tangency for $X$. If (i) $p$ is on the boundary of a sliding region and attracts sliding orbits and (ii) $p$ is a tangency of odd order for $Y$ then we call $p$ a tangency of type II (see Figure \ref{fig1-12}). We notice that if $p$ satisfies the previous conditions except (ii) and it is regular for $Y$ then $p$ repels sliding orbits.

\includegraphics[scale=1.1]{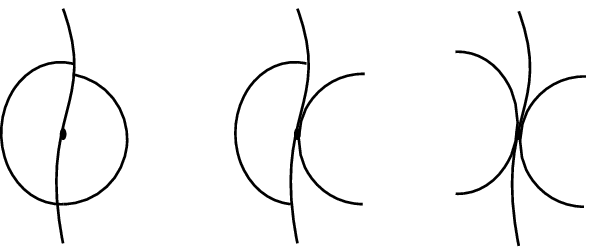}
\captionof{figure}{Types of double tangency points: elliptical between sewing regions (or tangency of type I), parabolic and hyperbolic, respectively.}\label{fig1-11}

\begin{center}
\includegraphics[scale=1.1]{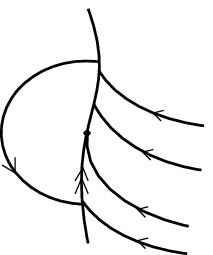}
\captionof{figure}{A tangency point of type II. In this case the double tangency point necessarily have an odd order contact.}\label{fig1-12}
\end{center}

In order to deal with sliding motion in the DVF we adopted the Filippov convention according to \cite{Filippov88} to define a vector field $Z^\Sigma$ on $\Sigma^e\cup\Sigma^s$ such that $Z^\s(p)$ is the vector on $T_pM$ tangent to $\s$ obtained by a convex combination of $X(p)$ and $Y(p)$. That is,
\begin{equation}\label{Filippov_vector-field}
Z^\Sigma(p)=\frac{Xf(p).Y(p)-Yf(p).X(p)}{Xf(p)-Yf(p)}.
\end{equation}
with $p\in \Sigma^e\cup\Sigma^s$.

In this paper we refer to $Z^\Sigma$ as {\it Filippov vector field}. The points $p\in\Sigma^e\cup\Sigma^s$ such that $Z^\Sigma(p)=0$, that is, $p\in\s$ such that $X(p)$ and $Y(p)$ are collinear and points to opposite direction are called \textit{pseudo equilibrium} of $Z$. The trajectories of $Z^\Sigma$ may constitute a trajectory of $Z$ according to the following definition. We alert that other convention could lead to different results, see for instance \cite{diBernardo08,BrouckePughSimic01}.

\begin{definition}\normalfont
A \textit{global trajectory} $\Gamma_Z(t,p)$ of a discontinuous vector field $Z$ is the trace of a
continuous curve obtained by concatenation of 
trajectories of $X$ and/or $Y$ and/or $Z^\Sigma$. A \textit{maximal trajectory} $\g_Z=\Gamma_Z(t,p)$ is a global trajectory that
cannot be extended by any concatenation of trajectories of $X,\ Y$ or $Z^\Sigma$. In this case, we call $I=(\tau^-(p),\tau^+(p))$ the \textit{maximal interval of the solution} $\g_Z$. The \textit{positive maximal trajectory} $\g_Z^+(t,p)$ is the portion of a maximal trajectory of $\g_Z(t,p)$ for which $t>0$.
\end{definition}

Notice that it is possible that we have $\tau^+(p)=+\infty$ for a maximal trajectory $\g_Z(t,p)$ such that $\g_Z(t,p)\neq q$ for $t<t_0$ and $\g_Z(t,p)=q$ for all $t\geq t_0>0$. (See the tangent point of type II in Figure \ref{fig1-12}).

\begin{definition}\normalfont\label{def-omega-limite}
Given $\Gamma_Z(t,p)$ a maximal trajectory of $Z$, the set 
$$
\begin{array}{c}
\omega(\Gamma_Z(t,p)) =
\left\{q\in M:\  \exists\ (t_n)\subset\R\ \right.\\
 \mbox{with} \displaystyle\lim_{t_n\to\tau^+(p)}\Gamma_Z(t_n,p)= q\}
\end{array}
$$
is called \textit{$\omega$-limit set of} $\Gamma_Z(t,p)$.
\end{definition}

A more detailed presentation of trajectories of DVF can be found in \cite{GuardiaSearaTeixeira11}.

\begin{definition}\label{def_pseudo-cycle}\normalfont
Let $Z$ be a discontinuous vector field. Consider $\Gamma$ a closed maximal trajectory of $Z$ such that $\Gamma\cap\Sigma\neq\emptyset$ and assume it contains neither equilibria nor pseudo-equilibria. Then, we say that $\Gamma$ is a \textit{pseudo-cycle} if
\begin{itemize}
\item[(i)] $\Gamma$ is positively or negatively invariant and
\item[(ii)] $\Gamma$ does not contain a proper maximal trajectory.
\end{itemize}
\end{definition}

The last definition is a refinement of the concept of pseudo cycle usually considered in the literature. That allow us to explore the pseudo cycles in a more accurate way. In particular we distinguish three types of pseudo-cycles: ($a$) \textit{crossing} pseudo-cycles, which are those satisfying $\Gamma\cap\Sigma\subset\Sigma^c$, see Figure \ref{fig1-15} ($a$), ($b$) \textit{tangent} pseudo-cycles, when $\Gamma\cap\Sigma^t\neq\emptyset$ and $\Gamma$ does not present sliding motion, see Figure \ref{fig1-15} ($b1-2$), and ($c$) \textit{sliding} pseudo-cycles, which are the pseudo-cycles having sliding motion, see Figure \ref{fig1-15} ($c1-3$).

\begin{center} 
\includegraphics[scale=0.6]{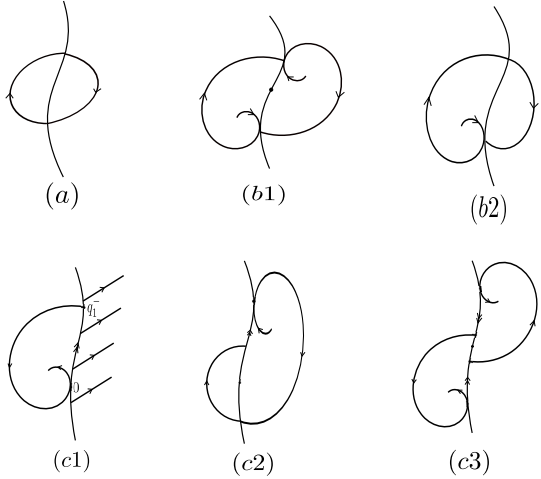}
\captionof{figure}{Examples of crossing ($a$), tangent ($b1-2$) and sliding ($c1-3$) pseudo-cycles.}\label{fig1-15}
\end{center}

\begin{definition}\label{mild_pseudo-cycle}\normalfont
A maximal trajectory as introduced in Definition \ref{def_pseudo-cycle} which do not satisfy one or two of the itens (i) and (ii) is called a \textit{mild pseudo-cycle}.
\end{definition}
In particular we distinguish three types of mild pseudo-cycle: (a) \textit{mild pseudo-cycle of type I} when only condition (i) fails, (b) \textit{mild pseudo-cycle of type II} when only condition (ii) fails, and (c) \textit{mild pseudo-cycle of type III} when both conditions (i) and (ii) fail (see Figure \ref{fig1-17} for some examples).

\begin{center} 
\includegraphics[scale=0.8]{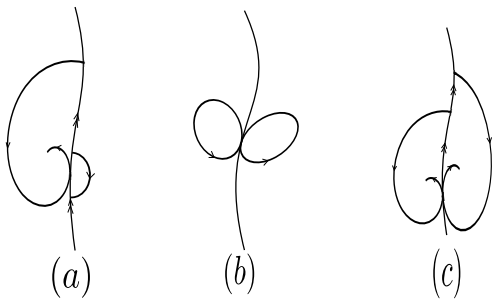}
\captionof{figure}{Examples of mild pseudo-cycles of type I (a), type II (b) and type III (c).}\label{fig1-17}
\end{center}

We shall see in Subsection \ref{mild-pc_and_chaotics} that some mild pseudo-cycles may present chaotic behavior so next we adapt that concept to DVF. Accordingly, one of the well accepted definitions of chaos in the literature is the one assuming topological transitivity, sensitive dependence on initial conditions and density of periodic orbits. These properties concerns Devaney's conditions for the existence of chaos for smooth systems.\\
Now let $\Lambda\subset M$ be a compact invariant set for the DVF $Z$. We say that $Z$ is \textit{topologically transitive} on $\Lambda$ if for any pair of non-empty, open sets $U$ and $V$ in $\Lambda$, there exist $p\in U,\ \Gamma^+_Z(t,p)$ a positive maximal trajectory and $t_0 > 0$ such that $\Gamma_Z(t_0,p)\in V$. Moreover we say that $Z$ exhibits \textit{sensitive dependence} on $\Lambda$ if there exist a fixed $r>0$ satisfying $r< \mbox{diam}(\Lambda)$ such that for each $x\in\Lambda$ and $\epsilon>0$ there exist $y\in B_\epsilon(x)\cap\Lambda$ and positive maximal trajectories $\Gamma_Z^+(t,x)$ and $\Gamma_Z^+(t,y)$ on $\Lambda$ passing through $x$ and $y$, respectively, satisfying $d(\Gamma_Z^+(t,x),\Gamma_Z^+(t,y))>r$ for some $t>0$, where $d$ is the Euclidian distance in $\mathbb{R}^n$ on which the Riemannian manifold $M$ have been isometrically embedded.

\begin{definition}\normalfont
We say that a compact invariant set $\Lambda$ is \textit{chaotic} for a discontinuous vector field $Z$ if 
\begin{enumerate}
\item[a)] $Z$ is topologically transitive on $\Lambda$;
\item[b)] $Z$ exhibits sensitive dependence on $\Lambda$ and
\item[c)] the periodic orbits of $Z$ are dense on $\Lambda$.
\end{enumerate}

Moreover, we distinguish three situations: (i) $\Lambda$ is a \textit{chaotic set of type I} if it has empty interior and does not present sliding motion; (ii) $\Lambda$ is a \textit{chaotic set of type II} if it has empty interior presenting sliding motion and (iii) $\Lambda$ is a \textit{chaotic set of type III} if it has non-empty interior. 
\end{definition}

Next we define pseudo-graphs. These objects are an important part of Theorem \ref{TPB_Euzebio-Juca} stated in the next section.

\begin{definition}\normalfont
Let $Z$ be a discontinuous vector field. A closed curve $\Gamma$ is a \textit{pseudo-graph} if $\Gamma\cap\Sigma\neq\emptyset$ and it is an union of trajectory-arcs of $Z$ joining equilibrium or pseudo-equilibrium (see Figure \ref{fig1-18}).
\end{definition}

\begin{center} 
\includegraphics[scale=1]{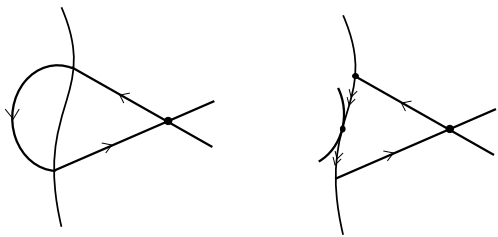}
\captionof{figure}{Examples of pseudo-graphs.}\label{fig1-18}
\end{center}


\subsection{Extension of Filippov vector fields beyond the boundary of $\s^{s,e}$}\label{extension}

\medskip


In what follows we briefly discuss the extension of one-dimensional vector fields to the boundaries of their domains in order to allow infinitely many transitions over $\Sigma$. Indeed, consider $Z$ as in equation (\ref{general_system}) and $Z^\Sigma$ the associated Filippov vector field defined on $\s^{s,e}$. We shall extend the Filippov vector field to the closure of escaping and sliding regions, that is, to their adjacent tangency points if it is possible. Assume that $p$ is the common boundary between $\Sigma^s$ and $\Sigma^e$ and consider
$$
L^{e,s} = \lim_{q\to p}Z^\Sigma(q).
$$
The limit is consider at $q\in\Sigma^e$ for $L^e$ and $q\in\Sigma^s$ for $L^s$. We split the analysis into two cases:
\begin{itemize}
	\item[(a)] $L^e=L^s$. In this case we define
	$$
	L=\lim_{q\to p} Z^\Sigma(q)=L^e=L^s,
	$$
	and therefore $Z^\Sigma(p)=L$. If $L\neq 0$ then the Filippov vector field in a neighborhood of $p$ points  to the same direction than $L^{e,s}$ and therefore the trajectory flows through p from $\Sigma^e$ to $\Sigma^s$ or vice-versa. In other words, $p$ is a regular point for the extended Filippov vector field. If $L=0$ then $p$ is a pseudo-equilibrium for it.
	\item[(b)] If $L^e\neq L^s$ both being non-zero pointing to the same direction, then by a re-scaling of time of  	$Z^{\Sigma^e}$ or $Z^{\Sigma^s}$ we obtain the same situation of (a) and we are done. If the vector fields $Z^{\Sigma^e}$ and $Z^{\Sigma^s}$ point to opposite directions, then we cannot extend the 
	Filippov vector field to $p$ because we obtain an one-dimensional Filippov dynamics around an attractor or repulsor point. The same happens when $L^e = 0$ 
	or $L^s = 0$, i.e., the Filippov vector field cannot be extended beyond its boundary.
\end{itemize}

\begin{remark}\label{equilibrium}\normalfont
	Notice that using the expression of the Filippov vector field given in (\ref{Filippov_vector-field}), when $p\in\Sigma$ is an equilibrium point of $Y$ or $X$ then $\displaystyle\lim_{q\to p}Z^\Sigma(q)=0$ and therefore $p$ is an equilibrium point of the extended Filippov vector field.
\end{remark}

\section{Main results}\label{heart}

\subsection{Statement of the main results}

In this paper we develop a semi-local study of DVF through an asymptotic analysis of trajectories in order to study limit sets and their properties. First we assume that trajectories transit from and to the switching manifold $\Sigma$ until they remain outside $\Sigma$ or inside it. After that we assume that infinitely many transitions on $\Sigma$ occur. Finally, we explore nontrivial recurrences of the chaotic sets that we obtained as limit sets.

In what follows we consider a discontinuous vector field $Z=(X,Y)$ on a two-dimensional Riemannian manifold $M$ and $K\subset M$ a compact set satisfying some hypotheses on $K$ and $Z$. Most ot them are only adapted from Poincaré-Bendixson Theorem to the discontinuous context.
\begin{itemize}
	\item[($K_1$)] $K$ is contained on some coordinate neighborhood of $M$;
	\item[($K_2$)] $\s\cap K$ is a smooth curve splitting $K$ into two disjoint connected components, $K\cap\s^+$ and $K\cap\s^-$;
	\item[($Z_1$)] $X$ and $Y$ have a finite number of equilibrium points on $K$;
	\item[($Z_2$)] $Z$ has only isolated pseudo-equilibrium points on $K\cap\s$;
	\item[($Z_3$)] both $X$ and $Y$ have at most one tangent point on $K\cap\s$.
\end{itemize}
\begin{remark}\label{zones-number}
	The hypothesis ($K_1$) is the motivation for referring to the approach of the paper as semi-local. However it may not be quite accurate because some global scenarios may be referred as semi-local. We point, for instance, to the whole Cartesian plane $\R^2$ or the two-dimensional sphere minus one point, among others. We also remark that hypothesis ($K_2$) motivates the two zones presented in equation \eqref{general_system} but one can define such it for any number of zones. 
\end{remark}

We now establish the main results of the paper.

\begin{theorem}\label{TPB_Euzebio-Juca}
	Consider a DVF $Z=(X,Y)$ on a two-dimensional Riemannian manifold $M$ and let  $K\subset M$ be a compact subset of $M$.  Assume that the hypotheses $(K_i)_{i=1}^2$ and $(Z_j)_{j=1}^3$  are fulfilled and that $Z$ has a positive maximal trajectory $\Gamma^+_Z(t,p)$ contained on $K$. Then the $\omega$-limit set of $\Gamma_Z(t,p)$ is one of the following objects:
	\begin{itemize}
		\item[(i)] an equilibrium of $X$ or $Y$;
		\item[(ii)] a periodic orbit of $X$ or $Y$;
		\item[(iii)] a graph of $X$ or $Y$;
		\item[(iv)] a pseudo-equilibrium of $Z$;
		\item[(v)] a (crossing, tangent or sliding) pseudo-cycle of $Z$;
		\item[(vi)] a mild pseudo-cycle of type I, II or III of $Z$;
		\item[(vii)] a pseudo-graph of $Z$;
		\item[(viii)] a tangency point of type I or II;
		\item[(ix)] a chaotic set of type III.
	\end{itemize}
\end{theorem}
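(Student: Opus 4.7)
The plan is to mimic the classical Poincaré--Bendixson strategy but dispatch over the qualitative nature of the interaction between $\Gamma^+_Z(t,p)$ and $\Sigma\cap K$. Write $T=\{t\in(0,\tau^+(p)): \Gamma^+_Z(t,p)\in\Sigma\}$ and split into three regimes: (A) $T$ is empty or bounded; (B) $T$ is unbounded and the trajectory eventually slides on $\Sigma^{s,e}\cap K$; (C) $T$ is unbounded with infinitely many transverse crossings. The compactness of $K$ ensures $\omega(\Gamma_Z)$ is non-empty, closed and invariant; hypotheses $(K_1)$--$(K_2)$ provide a Jordan-curve setting locally, and $(Z_1)$--$(Z_3)$ guarantee isolation of critical elements so that all relevant sets (equilibria, pseudo-equilibria, tangency points) are finite in $K$.

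In regime (A), after some $t_0$ the trajectory lies entirely in $\overline{\Sigma^+}$ or $\overline{\Sigma^-}$ and is governed by a smooth vector field ($X$ or $Y$). Then the classical Poincaré--Bendixson theorem on the two-dimensional surface $K\cap\overline{\Sigma^\pm}$ gives cases (i), (ii) or (iii). In regime (B), after $t_0$ the trajectory is a solution of the one-dimensional Filippov field $Z^\Sigma$ on an arc of $\Sigma^{s,e}\cap K$. Monotonicity of one-dimensional flows, together with the extension analysis of Subsection \ref{extension} at the boundary, forces accumulation either at an isolated pseudo-equilibrium (case (iv)) or at a boundary tangency point, which by the classification of double tangencies is of type I or II (case (viii)).

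Regime (C) is the substantive case. Here I would use a return-map argument: fix an arc $\tau\subset\Sigma^c\cap K$ crossed infinitely often and index the successive crossing times $t_1<t_2<\cdots\to\tau^+(p)$, producing points $p_n=\Gamma_Z(t_n,p)\in\tau$. Between consecutive crossings the piece of $\Gamma_Z$ together with the sub-arc $[p_n,p_{n+1}]\subset\tau$ bounds a Jordan domain in the coordinate neighborhood given by $(K_1)$; the usual monotonicity trick then shows $(p_n)$ is monotone on $\tau$ and converges to some $p^*$. Closing up the limiting curve and invoking the finiteness provided by $(Z_1)$--$(Z_3)$, the limit set is a closed concatenation of arcs of $X$, $Y$ and $Z^\Sigma$. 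Depending on whether this closed object avoids critical elements and is (positively or negatively) invariant and contains no proper maximal trajectory, Definitions \ref{def_pseudo-cycle} and \ref{mild_pseudo-cycle} place it into case (v) (pseudo-cycle) or (vi) (mild pseudo-cycle of type I, II or III); if equilibria or pseudo-equilibria appear on it, we land in (vii) (pseudo-graph). It remains to treat the possibility that the $p_n$ do not converge on a single transversal, which produces a chaotic-type accumulation; the auxiliary results from Subsections \ref{mild-pc_and_chaotics} and \ref{chaotic} force this accumulation to have non-empty interior in $K$, giving case (ix) (chaotic set of type III).

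The main obstacle is precisely this last step: ruling out chaotic sets of type I and II as $\omega$-limits while allowing type III. The monotone-intersection argument for a single transversal yields at most a closed curve (accounting for (v)--(vii)); obtaining a chaotic set as an $\omega$-limit requires that monotonicity fails for \emph{every} transversal inside the accumulation region, and this can only occur when sliding segments on $\Sigma$ are intercalated with crossing arcs in a way that fills a two-dimensional region. A careful bookkeeping of the sliding-to-crossing transitions via the extension of $Z^\Sigma$ discussed in Subsection \ref{extension}, combined with the density of periodic pseudo-orbits obtained by repeatedly perturbing the first-return itinerary on $\tau$, is what I expect to upgrade non-convergent accumulation into the Devaney-chaos criteria of type III, thereby closing the classification.
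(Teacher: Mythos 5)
Your regimes (A) and (B) line up with the paper's Fundamental Lemma, but regime (C) — the substantive case — rests on a monotone first-return argument on a transversal arc $\tau\subset\Sigma^c$, and this is where the proposal breaks down. The classical Jordan-domain monotonicity trick requires forward uniqueness of solutions so that the trajectory cannot re-enter the region it has enclosed; once the trajectory visits $\Sigma^e$ this fails, since escaping points admit multiple forward continuations and a maximal trajectory may legitimately re-cross the sub-arc $[p_n,p_{n+1}]$. The paper avoids this entirely: in the infinite-transition regime it does not build a return map on $\Sigma^c$ at all, but observes that a trajectory which leaves and re-enters $\overline{\Sigma^e\cup\Sigma^s}$ infinitely often must do so through visible tangency points, and then dispatches on the \emph{configuration of tangency points} of $X$ and $Y$ on $\Sigma\cap K$. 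Hypothesis $(Z_3)$ — which your proof never uses beyond generic "finiteness" — is exactly what makes this dispatch exhaustive: at most one tangency each for $X$ and $Y$ leaves only four cases (regular--tangent, non-coincident tangencies of opposite visibility, non-coincident visible--visible, coincident), treated in Lemmas \ref{aff1}--\ref{aff6} by explicitly enumerating the possible orderings of $\Sigma^c$, $\Sigma^s$, $\Sigma^e$ along $[\alpha,\beta]$ and tracking where the trajectory can exit and re-enter $\Sigma$.

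The second gap is your treatment of case (ix). You propose to detect chaos by "monotonicity failing for every transversal," but this is a symptom, not a mechanism, and it does not by itself exclude the non-trivial minimal sets of type I or II that the hypotheses are designed to rule out (compare Example 1 and Proposition \ref{proposition-3-zonas}, which show what happens when $(K_2)$ fails). In the paper the chaotic set of type III arises from one precise configuration: a \emph{coincident} parabolic or hyperbolic double tangency $p^-=p^+$ sitting on the common boundary of a sliding and an escaping region (Lemma \ref{aff6} and Proposition \ref{cor_chaos}). The two-dimensional set $\Lambda$ is constructed explicitly as the closure of the union of regions swept by arcs of $X$ and $Y$ funnelled through that double tangency, and transitivity, density of periodic orbits and sensitivity all follow from the fact that every interior point connects to the double tangency in both forward and backward time. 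Without isolating this configuration you cannot upgrade "non-convergent accumulation" to a set with non-empty interior, nor can you show that types I and II only occur as mild pseudo-cycles rather than as $\omega$-limits in their own right.
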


We highlight some points concerning Theorem \ref{TPB_Euzebio-Juca}. First, the sliding pseudo-cycles may have distinct topological types. For instance, some of the sliding pseudo-cycles we obtain are contained on $\Sigma^+$ (or $\Sigma^-$) and possess only a segment of slide. However, we also obtain sliding limit cycles occupying both $\Sigma^+$ and $\Sigma^-$ as well as occupying two disjoint sliding regions. We also notice that in statement $(ix)$ of Theorem \ref{TPB_Euzebio-Juca} it may occur a $\omega$-limit set having nonempty interior and other interesting topological properties. The next result describes such features.

\begin{theorem}\label{proprerties_of_lambda}
	Let $Z$ be a DVF such as in Theorem \ref{TPB_Euzebio-Juca} and assume that $Z$ has a parabolic or hyperbolic tangency point $p_0\in\partial(\Sigma^e\cup\Sigma^s)$ on the` interior of $K$. Assume that there exist a strictly increase sequence of times $t_i>0$ such that $\g_Z(t_i,p)\in\s^s$ and $\g_Z(t_{i+1},p)\in\s^e$. Then $Z$ admits a chaotic set $\Lambda$ of type III as $\omega$-limit of a maximal trajectory in such way that every point $q$ on its interior satisfies the following statements:
	\begin{itemize}
		\item[(a)] there exist a periodic orbit of $Z$ on $\Lambda$ passing through $q$;
		\item[(b)] there exist a dense orbit of $Z$ on $\Lambda$ passing through $q$;
		\item[(c)] $q$ is a non-trivial recurrent point on $\Lambda$;
		\item[(d)] $q$ is a non-trivial non-wandering point on $\Lambda$.
	\end{itemize}
\end{theorem}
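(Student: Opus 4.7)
The plan is to exploit the non-determinism created at the boundary between $\Sigma^e$ and $\Sigma^s$. First I would fix attention on the local picture near $p_0$. Because $p_0\in\partial(\Sigma^e\cup\Sigma^s)$ and is parabolic or hyperbolic, on one side of $p_0$ inside $\Sigma$ we find an escaping arc and on the other side a sliding arc. By the discussion in Subsection \ref{extension}, the Filippov field $Z^\Sigma$ does \emph{not} extend continuously through $p_0$, because the limits $L^e$ and $L^s$ point in opposite senses (one away from $p_0$, one toward it). Consequently, a trajectory that reaches any point of the escape arc admits, at each instant, (at least) three distinct continuations: continue along $Z^\Sigma$, detach following $X$ into $\Sigma^+$, or detach following $Y$ into $\Sigma^-$. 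This non-uniqueness is the mechanism that will force the $\omega$-limit to have non-empty interior.

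Second, I would use the alternation hypothesis to localize the accumulation. Since $\gamma_Z(t_i,p)\in\Sigma^s$ and $\gamma_Z(t_{i+1},p)\in\Sigma^e$ for an increasing sequence, between consecutive times the trajectory must cross from sliding to escaping through a point of $\partial(\Sigma^e\cup\Sigma^s)$; by $(Z_3)$ this boundary has only finitely many candidates in $K$, and compactness of $K$ together with the finiteness of critical elements forces at least a subsequence to accumulate at $p_0$. From this I would extract an auxiliary pseudo-trajectory $\widetilde\gamma$, obtained from $\gamma_Z$ by permitting at every return to the escape arc a small outward detachment, followed by a return into $\Sigma$ through the sewing/sliding portion, and sliding back toward $p_0$. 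Applying this construction iteratively produces a family of maximal trajectories whose initial data form an open neighborhood of a regular point.

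Third, I would define $\Lambda$ as the $\omega$-limit of a judiciously chosen maximal trajectory built via the construction above. The core claim is that $\Lambda$ has non-empty interior: one shows that for any $q$ sufficiently close to $p_0$ lying in the flow-box swept out by the family of detachments, there is a maximal trajectory of $Z$ that passes through $q$ and later re-enters the escape/slide loop, so $q\in\Lambda$. This yields an open set $U\subset\Lambda$, making $\Lambda$ a chaotic set candidate of type III. Topological transitivity then follows because, starting from any two open subsets of $U$, one can use the escape-region freedom to steer trajectories from one into the other in finite time; sensitive dependence follows from the exponential separation of the $X$- and $Y$-arcs launched from nearby points of $\Sigma^e$; and density of periodic orbits is obtained by closing up each such trajectory using a small sliding segment that returns it to its starting point, producing a sliding pseudo-cycle through any prescribed $q\in U$.

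The items (a)--(d) for an arbitrary interior point $q\in U$ then follow from the same construction: (a) a sliding pseudo-cycle through $q$ of the type just described; (b) an orbit whose prescribed detachment pattern is a dense sequence in a suitable symbolic coding of $U$; (c) and (d) are immediate consequences of (a) and (b). The step I expect to be the main obstacle is precisely the verification that $\Lambda$ has non-empty interior and that the interior consists exclusively of points enjoying property (a). This requires a careful construction showing that the escape-region detachment, concatenated with return trajectories of $X$ or $Y$ and subsequent sliding, genuinely sweeps out an open set and that every point of this set can be made periodic by an explicit closing argument, rather than merely recurrent. The parabolic/hyperbolic nature of $p_0$ is essential here, since it guarantees that the $X$- and $Y$-trajectories leaving the escape arc transversally cover a two-dimensional wedge, and not merely a curve.
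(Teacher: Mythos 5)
Your overall strategy is the same as the paper's: build $\Lambda$ as the closure of the region swept by trajectories that detach from the escaping arc adjacent to the double tangency, return to $\Sigma$ and slide back, and then verify (a)--(d) by connecting every interior point to the tangency point forward and backward in time (this is exactly Lemma \ref{aff6} plus the short argument in the paper's proof of Theorem \ref{proprerties_of_lambda}). However, your opening local analysis contains a genuine error that, taken at face value, breaks the very mechanism you rely on. You assert that $L^e$ and $L^s$ point in opposite senses so that $Z^\Sigma$ does \emph{not} extend through $p_0$. In the configuration forced by the hypothesis this is impossible: by definition of $\Sigma^e$, the forward trajectories of $X$ and $Y$ leave $\Sigma$ at every point of $\Sigma^e$, so no forward arc of $X$ or $Y$ can land on the escaping region. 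The only way a positive maximal trajectory can satisfy $\g_Z(t_{i+1},p)\in\s^e$ after $\g_Z(t_i,p)\in\s^s$ is by entering $\Sigma^e$ through its boundary along the \emph{extended} Filippov field, i.e.\ precisely case (a) of Subsection \ref{extension}, with $L^s$ and $L^e$ pointing in the same direction (from the sliding side through $p_0$ into the escaping side). This is what the paper uses ("the extended Filippov vector field connects $p_s$ to $p_e$ in future time" in Lemma \ref{aff5}); if the two one-sided limits were opposed, the trajectory would be trapped at $p_0$ or forced off $\Sigma$, contradicting the alternation hypothesis. The non-determinism you need does not come from non-extendability at $p_0$ but from the definition of $\Sigma^e$ itself: every point of the escaping arc, once reached via the extended sliding flow, admits the three continuations you list.

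Two smaller points. First, your closing argument for density of periodic orbits ("a small sliding segment that returns it to its starting point") is vaguer than necessary: the clean statement, which is the one the paper uses, is that for every $q\in int(\Lambda)$ there exist $t_0,t_1>0$ with $\Gamma_Z(t_0,q)=p_0$ and $\Gamma_Z(t_1,p_0)=q$, and the concatenation is already a periodic orbit; no separate "closing by sliding" is needed. Second, sensitive dependence does not require any exponential separation of the $X$- and $Y$-arcs: it suffices that two nearby interior points both reach $p_0$ and that from $p_0$ one may choose continuations ending at points of $\Lambda$ at distance greater than $\tfrac{1}{2}\,\mathrm{diam}(\Lambda)$. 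With the local picture at $p_0$ corrected, the rest of your construction coincides with the paper's.
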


In order to prove Theorems \ref{TPB_Euzebio-Juca} and \ref{proprerties_of_lambda} it is important to state the following result which is a generalization of Theorem 1 of \cite{Buzzi-Carvalho-Euzebio18}.

\begin{fundlemma}
	Consider a DVF $Z=(X,Y)$ on a two-dimensional Riemannian manifold $M$ and let  $K\subset M$ be a compact subset of $M$.  Assume that the hypotheses $(K_i)_{i=1}^2$ and ($Z_1$) are fulfilled and $Z$ has a positive maximal trajectory $\Gamma^+_Z(t,p)$ contained on $K$. If there exist $t_0>0$ such that $\Gamma_Z(t,p)\notin \overline{\Sigma^e\cup\Sigma^s},\ \forall t>t_0$, then the $\omega$-limit set of $\Gamma_Z(t,p)$ is one of the following objects:
	\begin{itemize}
		\item[(i)] an equilibrium of $X$ or $Y$;
		\item[(ii)] a periodic orbit of $X$ or $Y$;
		\item[(iii)] a graph of $X$ or $Y$;
		\item[(iv)] a crossing pseudo-cycle of $Z$;
		\item[(v)] a mild pseudo-cycle of type II of $Z$;
		\item[(vi)] a pseudo-graph of $Z$;
		\item[(vii)] a tangency point of type II.
	\end{itemize}
	On the other hand, if $\Gamma_Z(t,p)\in\Sigma^e\cup\Sigma^s$ for $t$ sufficiently large then $\omega(\Gamma_Z(t,p))$ is
	one of the following objects:
	\begin{itemize}
		\item[(viii)] a pseudo-equilibrium of $Z$ or
		\item[(ix)] a tangency point of type II.
	\end{itemize}
\end{fundlemma}

\begin{remark}\label{Remark}\normalfont
	In order to prove Theorems \ref{TPB_Euzebio-Juca} and \ref{proprerties_of_lambda} we will consider a positive maximal trajectory $\Gamma_Z^+(t,p)$ contained on $K$ in such way that there exists $s_n\to \tau^+(p)$ and $t_n\to\tau^+(p)$ satisfying $\Gamma_Z(s_n,p)\in\overline{\Sigma^e\cup\Sigma^s}$ and $\Gamma_Z(t_n,p)\notin\Sigma$. In other words, we will assume that $\Gamma_Z(t,p)$ visits and leaves $\overline{\Sigma^e\cup\Sigma^s}$ infinitely many times. Otherwise we may apply the Fundamental Lemma straightforwardly. Moreover, we notice that the classical planar Poincaré-Bendixson Theorem applies for smooth vector fields $X$ on compact sets contained on a coordinate neighborhood of $M$. Indeed its expression induced by a chart on some open neighborhood of $\R^2$ is a smooth vector field topologically equivalent to $X$.
\end{remark}

\subsection{Brief discussions}

The results presented in this section aims to extend the study of limit sets for discontinuous vector fields defined on manifolds under the presence of sliding motion. In a generic context the existence of sliding is mandatory because otherwise a coincidence of tangency points must take place (see for instance the work on structural stability in \cite{Filippov88}). Once sliding implies some non-determinism in trajectories, a chaotic regime may appear. This situation is particularly observed when some arc of trajectory connects both sliding and escape as stated in Theorem \ref{proprerties_of_lambda}. In particular, some non-trivial minimal sets can emerge as reported in \cite{Buzzi-Carvalho-Euzebio16}. As far as the authors know, a better understanding of those sets is far from be achieved.

Although hypothesis preceding Theorem \ref{TPB_Euzebio-Juca} are mainly due to Poincaré-Bendixson Theorem as commented before, they also avoid the presence of non-trivial minimal sets, we refer to $(K_2)$ and $(Z_3)$. We illustrate the meaning of them in the following two examples. We start by highlighting hypothesis $(K_2)$.

\begin{example}\label{exemplo-3-zonas}
	Consider the following planar linear DVF with three linearity zones
	\begin{equation}\label{system3zones}
	Z(x,y)=\left\{\begin{array}{l}
	(-y-1,x),\;\mbox{if} \|(x,y)\|\geq 1,\\
	(-2y,x),\;\mbox{if} \|(x,y)\|\leq 1.
	\end{array}
	\right.
	\end{equation}
	
	\begin{center}[h]
		\includegraphics[scale=0.65]{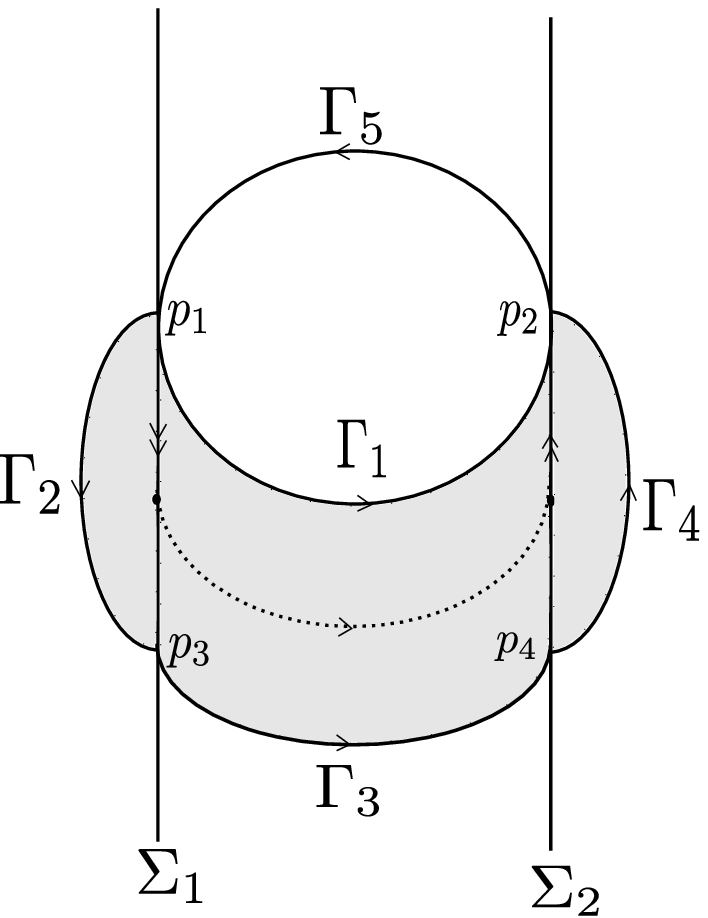}
		\captionof{figure}{A non-trivial minimal set of a planar discontinuous linear vector field with three linearity zones.}\label{fig1-7}
	\end{center}

	There exists two discontinuity lines, namely $\Sigma_1=\{-1\}\times\R\subset\R^2$ and $\Sigma_2=\{1\}\times\R\subset\R^2$, we call $\Sigma=\Sigma_1\cup\Sigma_2$. Moreover, it is easy to see that $\Sigma$ can be split into intervals of the form $\Sigma_1^c=\{-1\}\times[(-\infty,-1)\cup(0,+\infty)]$, $\Sigma_1^e=\{-1\}\times(-1,0)$, $\Sigma_2^c=\{1\}\times[(-\infty,-1)\cup(0,+\infty)]$ and $\Sigma_2^s=\{1\}\times(-1,0)$. The Filippov vector field
	on $\Sigma_1^e$ and $\Sigma_2^s$ is then $Z^\Sigma(x,y)=(0,x)$.
	
	Let $\mathit{\Lambda}$ be the set on $\R^2$ delimited by the curves $\Gamma_i$, $i=1\ldots,5$,
	where $\Gamma_1,\ \Gamma_3$ and $\Gamma_5$ are arc of trajectories of $Y$ connecting $p_1=(-1,0)$ to $p_2=(1,0)$, $p_3=(-1,-2)$ to $p_4=(1,-2)$ and $p_2$ to $p_1$, res\-pec\-tively; $\Gamma_2$ and $\Gamma_4$ are arc of trajectories of $X$ connecting $p_1$ to $p_3$ and $p_4$ to $p_2$, respectively
	(see Figure \ref{fig1-7}). The following result describes the behavior on $\mathit{\Lambda}$.
\end{example}

\begin{proposition}\normalfont\label{proposition-3-zonas}
	The non-empty set $\mathit{\Lambda}$ provided in Example 1 satisfies the following properties:
	\begin{enumerate}
		\item[(1)] it is chaotic for system \eqref{system3zones};
		\item[(2)] it is a minimal set.
	\end{enumerate}
	Moreover, there exist a trajectory $\widetilde{\Gamma}$ of (\ref{system3zones}) for which $\mathit{\Lambda}$ is its $\omega-$limit set.
\end{proposition}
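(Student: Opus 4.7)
The engine of both the chaos and the minimality of $\Lambda$ is the non-determinism exhibited at the escape region $\Sigma_1^e=\{-1\}\times(-1,0)$ and at the tangency point $p_1=(-1,0)$, which is a visible fold of the inner field $Y$. A maximal trajectory arriving at any point of $\Sigma_1^e$ admits several forward continuations: it may leave as an orbit of $Y=(-2y,x)$ on an inner ellipse $x^{2}+2y^{2}=c$, as an orbit of $X=(-y-1,x)$ on the outer circle $x^{2}+(y+1)^{2}=c$, or as a Filippov sliding motion along $\Sigma_1^e$ that can be abandoned at any interior point. The strategy is to turn this flexibility into a symbolic coding of maximal trajectories and then to read off the desired properties from it.

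First I would verify that $\Lambda$ is positively invariant by identifying each $\Gamma_i$ explicitly as an arc of the unit ellipse $x^{2}+2y^{2}=1$, the larger ellipse $x^{2}+2y^{2}=9$, or the circle $x^{2}+(y+1)^{2}=2$, and by checking at the corners $p_1,\dots,p_4$ that every admissible continuation returns to $\Lambda$. Next, to each maximal trajectory I would associate the sequence of labels recording, at every visit to $\Sigma_1^e$, which admissible branch is selected. With this coding in place, topological transitivity follows because, given open sets $U,V\subset\Lambda$, a trajectory starting in $U$ can be steered into $V$ by appropriate choices at $\Sigma_1^e$, since the $X$- and $Y$-foliations sweep every open set in uniformly bounded transit time; density of periodic orbits is obtained by closing up finite admissible itineraries into periodic pseudo-orbits whose traces are made to pass within any prescribed $\varepsilon$ of any prescribed point; and sensitive dependence is obtained by exhibiting a nearby $q$ whose trajectory is forced to make a different admissible choice at the first visit to $\Sigma_1^e$, after which the two orbits lie on arcs separated by a distance bounded below in terms of $\mathrm{diam}(\Lambda)$.

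Minimality is then immediate: any nonempty closed invariant subset $\Lambda'\subseteq\Lambda$ contains a maximal trajectory whose closure, by transitivity and by the availability of all admissible choices at every visit to $\Sigma_1^e$, equals $\Lambda$, so $\Lambda'=\Lambda$. To produce the trajectory $\widetilde{\Gamma}$ of the last claim I would fix a base point $p\in\Sigma_1^e$ and select an itinerary that realises a dense orbit of the shift on the coding space; by the transitivity argument the associated maximal trajectory visits every open subset of $\Lambda$ infinitely often, hence has $\Lambda$ as its $\omega$-limit. The main obstacle I expect is the combinatorial-cum-analytical bookkeeping: one has to check that every prescribed admissible sequence of choices at $\Sigma_1^e$ actually extends to a bona fide maximal trajectory (with no premature trapping at an invisible tangency or at the boundary of the extended Filippov field) and that the transit-time function between successive passages through $\Sigma_1^e$ is regular enough to translate symbolic density into metric density in $\mathbb{R}^{2}$.
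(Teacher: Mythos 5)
Your plan is correct and rests on exactly the mechanism the paper exploits: the forward non-uniqueness of trajectories at the escaping segment $\Sigma_1^e$ and at the visible fold $p_1=(-1,0)$ of the inner field. The paper, however, extracts everything from a much lighter observation than your symbolic coding: $p_1$ is a \emph{hub} --- every point of $\Lambda$ admits a forward trajectory reaching $p_1$ in finite positive time, and every point of $\Lambda$ lies on some forward trajectory issuing from $p_1$. Concatenating at $p_1$ then connects any $p$ to any $q$ of $\Lambda$ in positive time, which simultaneously gives minimality (the forward saturation of any point is all of $\Lambda$), topological transitivity, and density of periodic orbits (connect $p$ back to itself through $p_1$); sensitive dependence follows because two nearby points both reach $p_1$ at nearby times and can then be continued along different admissible branches, separating them by $\tfrac{1}{2}\mathrm{diam}(\Lambda)$; and $\widetilde{\Gamma}$ is a suitable concatenation through $p_1$. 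Your full shift-space formalization proves the same statements but imports precisely the bookkeeping burden you flag yourself (realizability of arbitrary itineraries, regularity of transit times), none of which the hub argument needs. Two small cautions on your side: $\Sigma_1^e$ is an escaping region, so the ``sliding motion'' you invoke there is really an arc of the Filippov field $Z^\Sigma$ on $\Sigma^e$ (allowed by the paper's definition of global trajectory, but worth naming correctly); and your minimality step should be phrased as the paper does --- via reachability of every $q$ from every $p$ --- rather than via ``closure of a dense trajectory,'' since the latter silently assumes the invariant subset contains \emph{all} continuations of its points.
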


The proof os Proposition \ref{proposition-3-zonas} is presented at the end of Section \ref{proof}.

Next example introduces a class of DVF satisfying hypothesis $(Z_3)$. In particular, we remark that this hypothesis is important to avoid the existence of non-trivial minimal sets as obtained in \cite{Buzzi-Carvalho-Euzebio18} and \cite{Buzzi-Carvalho-Euzebio18}. A generalization of the example is presented at Section \ref{linear}.

\begin{example1}
	The class of relay systems is considered in control theory, friction phenomena and other areas (see for instance \cite{AndronovEtAl66}, \cite{diBernardo08}, \cite{Flotz53} and references therein). According to \cite{diBernardo08} a planar relay system is a DVF that can be written in the general form 
	\begin{equation}\label{relay_system}
	X_{sgn(y)}:\left\{
	\begin{array}{lll}
	\dot{x} & = & Ax+Bu,\\
	y & = & C^Tx,\\
	u & = & sgn(y),
	\end{array}
	\right.
	\end{equation}
	where
	$$
	A=\left(
	\begin{array}{cc}
	a_{11}&a_{12}\\
	a_{21}&a_{22}
	\end{array}
	\right),\,
	B=\left(
	\begin{array}{c}
	b_{1}\\
	b_{2}
	\end{array}
	\right)\mbox{and}
	$$
	$$
	C=\left(
	\begin{array}{c}
	c_{1}\\
	c_{2}
	\end{array}
	\right)
	$$
	
	are real matrices. The sign function induces a separation region which in this case is the straight line $y=0$. Indeed, it is easy to see that $X_1$ and $X_2$ have at most one tangent point located at $\left(\frac{b_2}{c_2} a_{21},0\right)$ and $\left(\frac{-b_2}{c_2}a_{21},0\right)$, respectively. Therefore the class of relay systems has at most two tangent points with $y=0$.
\end{example1}

\begin{remark}\label{remark-sigma}
	Finally we remark that the shape of the switching manifold $\Sigma$ does not play any role throughout this paper. Still, it will useful to introduce a fixed parametrization of the curve $\s\cap K$, so consider
	$$
	\sigma:[\alpha,\beta]\subset\R\to\s\cap K,
	$$ 
	withe $\alpha<0<\beta$. We assume that $\s\cap K$ has the natural ordering given by the identification $\s\cap K=[\alpha,\beta]$. Moreover, for our purposes we only consider tangent points on $\s\cap K$ of even order. Otherwise the trajectories crosses $\Sigma$ always in the same direction as the regular case being the last situation considered along the paper.
\end{remark}




\section{Global analysis and au\-xi\-li\-aries results}\label{global-analysis}

This section is devoted to analyze the possible limit sets arising in the context of Theorem \ref{TPB_Euzebio-Juca}. In order to highlight the different kind of objects we are going to deal with, we split the study into some particular cases starting with the pseudo-cycles.

From now on we assume that trajectories of the DVF composing a maximal trajectory $\Gamma_Z(t,p)$ are contained on the compact set $K$ for positive values of $t$ and satisfy the following hypotheses according to Remarks \ref{Remark} and \ref{remark-sigma}.
\begin{itemize}
	\item[$H_1$:] $\g_Z^+(t,p)$ leaves $\overline{\s^{s,e}}$ and returns to it infinitely many times;
	\item[$H_2$:] the tangent points of $Z$ with $\s\cap K$ have even order.
\end{itemize}
We notice that these are non-empty assumptions such we will see on Section \ref{linear}. We also notice that occurring $H_1$ one may define a map from $\Sigma$ to itself , see for instance \cite{Buzzi2014}, so the first return on $\Sigma$ takes place. This notion is often used throughout this paper.

\subsection{Pseudo-cycles}\label{pseudo-cycles}

In this subsection we fully describe how a pseudo-cycle (of crossing, tangent or sliding type)  emerges as $\omega$-limit set of a maximal trajectory as stated in Theorem \ref{TPB_Euzebio-Juca}. In particular, the approach we consider allow us to better comprehend the topological structure of pseudo cycles as well as their asymptotic behavior.

Since the crossing pseudo-cycles were addressed in the Fundamental Lemma, next we will restrict our attention to the tangent and sliding pseudo-cycles obtained as $\omega-$limit sets for a maximal trajectory of a DVF that satisfy the hypothesis of Theorem \ref{TPB_Euzebio-Juca} as well as $H_1$ and $H_2$ above.\\
We remark the considered identification of $\s\cap K$ with the interval $[\alpha,\beta]\subset\R$ containing $0$ in its interior, according to Remark \ref{remark-sigma}.

\subsubsection{The regular-tangent case}\label{regular-tangent}

\begin{lemma}\label{aff1}
	Assume that hypotheses of Theorem \ref{TPB_Euzebio-Juca}, $H_1$ and $H_2$ hold, $X$ has a tangent point $p^-$ on the interior of $\s\cap K$ and $Y$ is transversal to $\s$ on $K$. Then $\omega(\g_Z(t,p))$ is either a tangent or a sliding pseudo-cycle.
\end{lemma}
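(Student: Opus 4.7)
The plan is to adapt the Poincar\'e--Bendixson argument to this discontinuous setting via a first--return map on $\s \cap K$. Since $Y$ is everywhere transversal to $\s$ on $K$, the sign of $Y\cdot f$ is constant on $\s \cap K$; combined with the fact that $X$ vanishes on $\s$ only at $p^-$, the arc $\s \cap K$ is partitioned by $p^-$ into two subarcs, one of which is sewing and the other of which is either entirely sliding or entirely escaping (the type is determined by the sign of $X\cdot f$ on that side). This rigid structure will make every return of $\g_Z^+$ to $\s$ either a transverse crossing, or a slide along one definite component of $\s^{s,e}$.

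First I would use $H_1$ to extract from $\g_Z^+(t,p)$ an infinite sequence of successive intersections $p_n \in \s \cap K$ and define a first--return map $\pi$ with $p_{n+1}=\pi(p_n)$. Via the parametrization $\sigma:[\alpha,\beta]\to\s\cap K$ of Remark \ref{remark-sigma}, I identify $(p_n)$ with a sequence in $[\alpha,\beta]$. Then I would prove that $(p_n)$ is monotone in $[\alpha,\beta]$ by the standard Jordan--curve argument: by $(K_1)$ and $(K_2)$, $K$ lies in a coordinate chart and is split by $\s$ into two Jordan domains, so the trajectory--arc of $\g_Z^+$ from $p_n$ to $p_{n+1}$, together with the $\s$--segment joining them, bounds a topological disk that no later trajectory--arc of $\g_Z^+$ can cross. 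This forces $p_{n+2}$ to lie on the same side of $p_{n+1}$ as $p_{n+1}$ does of $p_n$.

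Second, by compactness of $[\alpha,\beta]$ and monotonicity, $p_n$ converges to some $q^\ast \in \overline{\s\cap K}$. Since $Y$ is transversal on all of $K$ and the unique tangency of $X$ is $p^-$, the candidates for $q^\ast$ are: (a) $q^\ast = p^-$; (b) $q^\ast$ a regular point of $\s$ lying in the sewing subarc; (c) $q^\ast \in \overline{\s^s\cup\s^e}$. Case (b) is incompatible with $H_1$: if $q^\ast\in\s^c$ then, by transversality of both $X$ and $Y$ at $q^\ast$ and continuity of $\pi$ there, the tail of $\g_Z^+$ would be trapped in a tubular neighborhood of a transverse closed orbit through $q^\ast$, hence would avoid $\overline{\s^{s,e}}$ for large $t$, contradicting $H_1$. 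So only (a) and (c) survive.

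Third, I would produce the pseudo-cycle. Wherever $\pi$ is continuous, passing to the limit in $p_{n+1}=\pi(p_n)$ yields $\pi(q^\ast)=q^\ast$ (the one--sided limit from the side the $p_n$ approach), so the orbit $\Gamma$ of $Z$ issuing from $q^\ast$ closes up in finite positive time into a closed maximal trajectory. By $(Z_1)$ and $(Z_2)$ and the finiteness of the return time, $\Gamma$ contains no equilibrium and no pseudo-equilibrium, and positive invariance is inherited from the fact that $\Gamma$ is the monotone limit of consecutive returns, so the two clauses of Definition \ref{def_pseudo-cycle} hold. In case (a), $\Gamma \cap \s = \{p^-\} \subset \s^t$ and $\Gamma$ has no sliding arc, so $\Gamma$ is a \emph{tangent} pseudo-cycle. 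In case (c), $\Gamma$ contains a nontrivial arc of the Filippov vector field $Z^\Sigma$ in $\s^{s,e}$, so $\Gamma$ is a \emph{sliding} pseudo-cycle; the extension machinery of Subsection \ref{extension} is what guarantees that this sliding arc closes up at its endpoints rather than terminating at a pseudo-equilibrium (excluded by $(Z_2)$ and the finite return time).

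The main obstacle will be the bookkeeping around $q^\ast = p^-$, where the first--return map is typically discontinuous and the limiting trajectory may fold back tangentially; here I would use $H_2$ (even--order contact) to show that the tangential return from the sewing side is well-defined and that $\Gamma$ does not acquire a spurious sliding segment at $p^-$. A secondary subtlety, in case (c) when $q^\ast$ lies on the boundary of $\s^{s,e}$, is to verify via the extension analysis of Subsection \ref{extension} that the Filippov arc of $\Gamma$ is traversed in finite time and reconnects transversally with the $X$-- and $Y$--arcs that make up the rest of $\Gamma$.
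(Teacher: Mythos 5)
Your decomposition of $\Sigma\cap K$ into a sewing subarc and a single sliding-or-escaping subarc meeting at $p^-$ is exactly the paper's starting point (its cases (i)--(iv)), and your final dichotomy (tangent cycle if the loop closes at $p^-$, sliding cycle otherwise) is the right conclusion. The gap is in the engine you propose to get there: a single-valued, monotone first-return map $\pi$ on $\Sigma\cap K$ whose iterates converge to a fixed point. On the escaping subarc the forward continuation of a global trajectory is not unique (from a point of $\overline{\Sigma^e}$ one may graze, slide for an arbitrary time, or be ejected by $X$ or by $Y$), so $\pi$ is at best set-valued there; and the Jordan-curve monotonicity argument fails when the arc joining $p_n$ to $p_{n+1}$ contains a sliding segment of $\Sigma$ itself, since the closed curve ``arc $\cup$ $\Sigma$-segment'' is then degenerate and its $\Sigma$-portion is no longer crossed transversally in a single direction. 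These are not cosmetic issues: they are precisely the features that distinguish this lemma from the crossing case already handled in the Fundamental Lemma, where your mechanism is the correct one.

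More importantly, no limiting process is needed, and recognizing this is the missing idea. Because $Y\cdot f$ has constant sign and $p^-$ is the only tangency, the escaping subarc can be reached in forward time only through its boundary point $p^-$ (its interior points emit, rather than receive, the orbits of $X$ and $Y$), and dually the sliding subarc can be left only by being funneled along $Z^\Sigma$ to $p^-$. Hence $H_1$ forces $\Gamma_Z^+(t,p)$ to actually pass through $p^-$; from that moment the ejection is along the unique $X$-arc through $p^-$, whose return point $q_1^-\geq p^-$ on $\Sigma$ is fixed once and for all (any other landing point would lie in $\Sigma^c$ and the trajectory would cross and never revisit $\overline{\Sigma^{s}\cup\Sigma^{e}}$, violating $H_1$). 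So the trajectory is \emph{eventually periodic}: your sequence $(p_n)$ is eventually constant, $\omega(\Gamma_Z(t,p))$ is the closed curve itself, a tangent pseudo-cycle if $q_1^-=p^-$ and a sliding one if $q_1^->p^-$; the remaining configurations follow by reversing time. This direct structural argument is the paper's proof, and it is what your fixed-point-of-$\pi$ machinery would have to be repaired into.
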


\begin{proof}
	Without loss of generality we take $p^-=0$. Since $H_2$ holds we avoid odd tangency points so the following possibilities can occur.
	
	\begin{itemize}
		\item[(i)] $\Sigma^c\cap K=[\alpha,0)$ and $\Sigma^e\cap K=(0,\beta]$;
		\item[(ii)] $\Sigma^e\cap K=[\alpha,0)$ and $\Sigma^c\cap K=(0,\beta]$;
		\item[(iii)] $\Sigma^c\cap K=[\alpha,0)$ and $\Sigma^s\cap  K=(0,\beta]$;
		\item[(iv)] $\Sigma^s\cap K=[\alpha,0)$ and $\Sigma^c\cap K=(0,\beta]$.
	\end{itemize}
	
	\medskip
	
	Consider $p\in K$. First we assume that statement (i) holds. Since $H_1$ holds, we get $\Gamma_Z^+(t,p)\cap\Sigma\subset\Sigma^e\cup\{0\}$ being $0$ a visible tangency point for $X$ and a regular one for $Y$. Moreover, the trajectory $\Gamma_Z^+(t,p)$ only returns to $\Sigma$ by the tangency point $0$, otherwise such a trajectory would not return to $\s$. So let $q_1^-\geq 0$ be the point on $\Sigma$ such that the negative trajectory of $X$ starting at $0$ meets $\Sigma$ (see Figure \ref{fig1-15}). Since we are assuming infinitely many returns to $\Sigma$ it follows that $\Gamma^+_Z(t,p)$ only leaves $\Sigma^e$ by $q_1^-$. Consequently we obtain a tangent pseudo-cycle of $Z$ if $q_1^-=0$ (whose intersection with $\Sigma$ is the tangency point $0$) or a sliding pseudo-cycle of $Z$ if $q_1^->0$ and we are done.
	
	It is easy to see that statement (ii) leads to the analogous situation. Similarly, the proof of statements (iii) and (iv) is treated in the same way by reversing time in the trajectory $\Gamma_Z(t,p)$.
\end{proof}

\subsubsection{The tangent-tangent case assuming $p^-\neq p^+$}\label{p+difp-}

\medskip

Now each vector field $X$ and $Y$ contribute with tangency points $p^-$ and $p^+$ on the interior of $\s\cap K$, respectively.
Without loss of generality, assume that $p^-<p^+$. Now we have the following configurations of $\Sigma$ (we will to consider them in the proofs of Lemmas \ref{aff2} and \ref{aff3}):

\begin{itemize}
	\item[(i)] $\Sigma^c\cap K=[\alpha,p^-)\cup(p^+,\beta]$ and $\Sigma^s\cap K=(p^-,p^+)$;
	\item[(ii)] $\Sigma^c\cap K=[\alpha,p^-)\cup(p^+,\beta]$ and $\Sigma^e\cap K=(p^-,p^+)$;
	\item[(iii)] $\Sigma^s\cap K=[\alpha,p^-),\ \Sigma^c\cap K=(p^-,p^+)$ and $\Sigma^e\cap K=(p^+,\beta]$;
	\item[(iv)] $\Sigma^e\cap K=[\alpha,p^-),\ \Sigma^c\cap K=(p^-,p^+)$ and $\Sigma^s\cap K=(p^+,\beta]$.
\end{itemize}

We also split the analysis according to the visibility or not of $p^-$ and $p^+$. Since $H_1$ holds the invisible-invisible case cannot occur. In fact, if $p^-$ and $p^+$ are invisible tangency points then either $\Gamma_Z(t,p)\in\overline{\Sigma^s}$ or $\Gamma_Z(t,p)\notin\Sigma$ for $t$ sufficiently large since there is at most one tangent point of $X$ or $Y$ with $\s\cap K$. Then we only analyze the cases where at least one tangency point $p^-$ or $p^+$ is visible.


\paragraph{The invisible-visible sub-case}

\begin{lemma}\label{aff2}
	Assume that hypotheses of Theorem \ref{TPB_Euzebio-Juca}, $H_1$ and $H_2$ hold and both $X$ and $Y$ have non coincident tangency points $p^-$ and $p^+$, respectively, on the interior of $K\cap\s$ having opposite visibility. In this case $\omega(\g_Z(t,p))$ is a crossing, a tangent or a sliding pseudo-cycle.
\end{lemma}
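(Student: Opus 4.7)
The plan is to analyze the iterates of the first-return map $\pi$ on $\Sigma\cap K$ produced by $\Gamma_Z^+(t,p)$ and show that, in all four configurations (i)--(iv) and under the opposite-visibility hypothesis, these iterates form a monotone sequence whose limit generates a closed trajectory of $Z$ that is necessarily a crossing, tangent or sliding pseudo-cycle.

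First I would use $H_1$ to extract the sequence of consecutive crossing times $t_n\to\tau^+(p)$ together with the points $q_n=\Gamma_Z(t_n,p)\in\Sigma\cap K$, so that $q_{n+1}=\pi(q_n)$, where $\pi$ is assembled from the $X$- and $Y$-flows and, where appropriate, a finite sliding arc of $Z^\Sigma$. By the Jordan-curve argument underlying the smooth Poincaré--Bendixson theorem together with the transversality of $X$ and $Y$ to $\Sigma^c$, the sequence $\{q_n\}$ is monotone with respect to the parametrization $\sigma$ of Remark \ref{remark-sigma}, so by compactness $q_n\to q^{*}\in\overline{\Sigma\cap K}$. Next I would distinguish the four configurations and use $H_2$ together with the opposite-visibility hypothesis to control $\pi$ at $p^-$ and $p^+$: the invisible tangency allows an $X$- or $Y$-trajectory to continue to the opposite side of $\Sigma$, whereas the visible one reflects the trajectory back into its own region, so in particular no escaping subinterval of $\Sigma$ is ever visited in forward time. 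Passing to the limit, the trajectory of $Z$ through $q^{*}$ is closed; by $(Z_1)$, $(Z_2)$ and the assumption that we are outside the equilibrium or pseudo-equilibrium case of Theorem \ref{TPB_Euzebio-Juca}, it contains neither equilibria nor pseudo-equilibria, so by Definition \ref{def_pseudo-cycle} it is a pseudo-cycle.

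The type of that limit cycle is then read off from the location of $q^{*}$: an interior point of $\Sigma^c$ whose closed trajectory avoids both tangencies gives a crossing pseudo-cycle; the visible tangency with no sliding engaged gives a tangent pseudo-cycle; and a boundary point of $\Sigma^s$, or an interior sliding point reached through the invisible tangency in configurations (i), (iii) or (iv), gives a sliding pseudo-cycle. That $\omega(\Gamma_Z(t,p))$ coincides with this closed trajectory follows from the standard Poincaré--Bendixson sandwich argument adapted to the discontinuous setting, together with the Fundamental Lemma applied to the tails of $\Gamma_Z^+(t,p)$ in between consecutive returns to $\Sigma$.

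The main obstacle I foresee is the rigorous verification that $\pi$ extends continuously and monotonically to the closure of each sewing interval of $\Sigma\cap K$ across the tangencies and along the boundary of the sliding segment simultaneously. Away from $p^-$ and $p^+$ the curve $\Sigma$ is a transversal and monotonicity is classical, but at a tangency the return map folds; the opposite-visibility hypothesis is precisely what prevents the elliptic double-tangency behaviour that would produce genuine non-determinism, and hence a chaotic $\omega$-limit, and it guarantees that the only possible accumulation points of $\{q_n\}$ on $\Sigma\cap K$ are an interior sewing point, the visible tangency, or a boundary point of the sliding region, matching exactly the three pseudo-cycle types in the statement.
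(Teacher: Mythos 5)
Your strategy (a Poincar\'e--Bendixson-style monotone sequence of returns $q_{n+1}=\pi(q_n)$ converging to a point $q^{*}$ whose orbit is closed) is genuinely different from the paper's argument, and its central step is exactly the point you defer to your ``main obstacle'' paragraph without resolving. The Jordan-curve monotonicity argument for the smooth return map relies on the flow being injective; here the first-return map is \emph{not} injective, because every return that lands in $\overline{\Sigma^s}$ is funnelled to the unique visible tangency before the trajectory can leave $\Sigma$ again. So monotone convergence of $\{q_n\}$ to a fixed point of $\pi$ is not the operative mechanism, and the claim that ``the trajectory through $q^{*}$ is closed'' is asserted rather than proved. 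The paper turns this non-injectivity to its advantage: writing (WLOG) $p^-$ invisible and $p^+$ visible, in configuration (i) the hypothesis $H_1$ forces the trajectory to reach $p^+$ at some finite time, and then everything is decided by where the first return $p_1^+$ of the $Y$-orbit of $p^+$ lands --- in $(p^-,p^+]$, at $p^-$, or below $p^-$. In each of the three cases the trajectory closes up \emph{exactly} after at most one more excursion (because the sliding segment sends it back to $p^+$, or the sewing part sends it to a point that slides back to $p^+$), yielding a tangent or sliding pseudo-cycle in finite time; no limit of return points is ever taken. Configuration (ii) is handled by time reversal, and configurations (iii)--(iv) reduce to the regular-tangent case of Lemma \ref{aff1} because once the trajectory enters $\Sigma^s$ it can never reach $\Sigma^e$ again under the opposite-visibility hypothesis.

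Two further points. First, your blanket claim that ``no escaping subinterval of $\Sigma$ is ever visited in forward time'' is false in configuration (ii), where $\Sigma^e\cap K=(p^-,p^+)$ and $H_1$ forces the trajectory to interact with $\overline{\Sigma^e}$ infinitely often; this is precisely why the paper treats (ii) by reversing time rather than by your transversality argument. Second, even if monotone convergence of $\{q_n\}$ were established, you would still owe the identification $\omega(\Gamma_Z(t,p))=$ the closed orbit through $q^{*}$; invoking ``the standard sandwich argument adapted to the discontinuous setting'' hides the fact that the flow box / long flow tube lemma fails at tangency points and on $\partial\Sigma^s$, which is where all the action of this lemma takes place. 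To repair your proof you would need to show that after the first time the trajectory meets $\overline{\Sigma^s}$ its itinerary is deterministic and eventually periodic --- at which point you have reproduced the paper's case analysis and the return-map machinery becomes superfluous.
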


\begin{proof}
	Without loss of generality we suppose that $p^-$ is an invisible tangency point and $p^+$ is a visible one. Assume that statement (i) holds. Since $H_1$ holds there exist a value $\widetilde{t}>0$ satisfying $\Gamma_Z(\widetilde{t},p)=p^+$ and a point $p_1^+\leq p^+$ which is the first return on $\Sigma$ in such way that the positive trajectory of $Y$ from $p^+$ meets $\Sigma$. (See Figure \ref{fig1-2}). We have three situations:
	
	\begin{itemize}
		\item If $p_1^+\in (p^-,p^+]$ then the regular-tangent case in the previous subsection applies. So we obtain a tangent or sliding pseudo-cycle of $Z$ if $p_1^+=p^+$ or $p^-<p_1^+<p^+$, respectively. (See Figure \ref{fig1-2} (a)).
		\item If $p_1^+=p^-$ then the Filippov vector field connects $p^-$ to $p^+$ since $H_1$ holds. Therefore pseudo-equilibria between $p^-$ and $p^+$ cannot occur in this situation. In this case we obtain a sliding pseudo-cycle. (See Figure \ref{fig1-2} (b)).
		\item If $p_1^+<p^-$ and setting $\Gamma_Z(t_0,p)=p_1^+$ from $H_1$ it follows that $\Gamma_Z(t,p)$ returns to $\Sigma$ at $p_2^+\in(p^-,p^+]$ for some $t>t_0$ following $X$. If $p^-<p_2^+<p^+$ then the last bullet applies and we obtain a sliding pseudo-cycle. If $p_2^+=p^+$ then $\Gamma_Z(t,p)$ is a tangent pseudo-cycle touching $\Sigma$ at two points, one of them a sewing one and other the tangency point $p^+$.
	\end{itemize}
	
	\begin{center} 
		\includegraphics[scale=0.5]{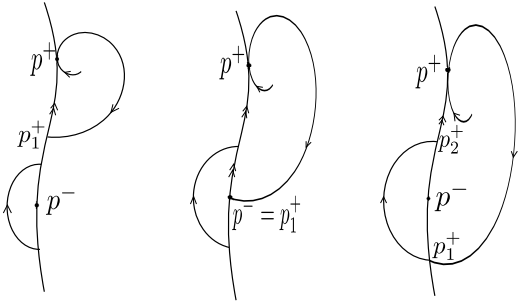}
		\captionof{figure}{The invisible-visible case of statement (i).}\label{fig1-2}
	\end{center}
	
	Statement (ii) is treated analogously to the previous case by reversing time on the trajectory $\Gamma_Z(t,p)$. Lastly, assume that statements (iii) or (iv) hold. In those cases, if $\Gamma_Z(t_0,p)\in\Sigma^s$ then $\Gamma_Z(t,p)\notin\Sigma^e$ for $t>t_0$ since we are considering the invisible-visible sub-case. In fact, if $p^-$ is an invisible tangency point and $p^+$ is a visible one it follows that  $\Gamma_Z(t,p)\cap\Sigma\subset [p^+,\beta]\subset\s^e$ for $t$ sufficiently large. Then the tangent-transversal case applies and therefore we obtain the same possibilities for limit set obtained in that situation.
	Interchanging the visibility of $p^-$ and $p^+$ we obtain analogous conclusions.
\end{proof}



\paragraph{The visible-visible sub-case}

\begin{lemma}\label{aff3}
	Assume that hypotheses of Theorem \ref{TPB_Euzebio-Juca}, $H_1$ and $H_2$ hold, $X$ and $Y$ have non coincident tangency points $p^-$ and $p^+$ on the interior of $K\cap\s$, respectively, being both visible. Then $\omega(\g_Z(t,p))$ is a crossing, a tangent or a sliding pseudo-cycle.
\end{lemma}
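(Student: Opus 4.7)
The plan is to follow the same architecture as the proof of Lemma \ref{aff2}, analyzing each of the four configurations (i)--(iv) for $\Sigma\cap K$ listed preceding the invisible-visible sub-case. Case (ii) will reduce to case (i) by reversing time on $\Gamma_Z(t,p)$. In configurations (iii) and (iv) the escaping and sliding regions sit on opposite sides of the interval $(p^-,p^+)$, and hypothesis $H_1$ forces the positive orbit to accumulate near only one of the tangencies, collapsing the problem to the regular-tangent setting of Lemma \ref{aff1}; the same three possible limits reappear.

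The core of the argument is configuration (i), where $\Sigma^s=(p^-,p^+)$ sits between two visible tangencies and $\Sigma^c=[\alpha,p^-)\cup(p^+,\beta]$ lies outside. I would track the positive maximal trajectory through its first-return map on $\Sigma\cap K$. Since both tangencies are visible, the $X$-arc issuing from $p^-$ returns to $\Sigma$ at a first point $p_1^-$, and the $Y$-arc issuing from $p^+$ returns at a first point $p_1^+$; hypothesis $H_1$ forces the orbit to reach these tangencies infinitely often. The case split on the locations of $p_1^\pm$ is then the crux: if both return points land in the sewing regions, the orbit closes up into a crossing pseudo-cycle; if $p_1^-=p^-$ or $p_1^+=p^+$, a tangent pseudo-cycle appears, touching $\Sigma$ at a tangency and possibly at one transverse point; if a return point falls inside the slide segment $(p^-,p^+)$, a sliding pseudo-cycle emerges, containing a slide arc terminated at the corresponding tangency.

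The main obstacle will be to exclude interleaving behavior in which the orbit alternates between neighborhoods of $p^-$ and $p^+$ without ever closing. To handle this I would invoke monotonicity of the first-return map, inherited from the Jordan-curve obstruction available on the compact set $K$ contained in a coordinate neighborhood by $(K_1)$: monotonicity together with compactness of $\Sigma\cap K$ forces a fixed point whose $Z$-orbit is precisely the pseudo-cycle claimed. Hypothesis $(Z_2)$ prevents the slide arc from accumulating on a pseudo-equilibrium interior to $(p^-,p^+)$, and $H_2$ excludes odd-order tangencies that would degenerate into transversal crossings. These together guarantee that the $\omega$-limit is one of the crossing, tangent, or sliding pseudo-cycles, exhausting the cases.
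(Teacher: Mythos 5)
Your proposal is correct and follows the same overall architecture as the paper's proof: the same four configurations of $\Sigma\cap K$, time reversal to dispose of (ii), and reduction of (iii)--(iv) to the regular-tangent case of Lemma \ref{aff1} (the paper phrases this as: once $\Gamma_Z(t_0,p)\in\Sigma^s$ the orbit never reaches $\Sigma^e$ again, so only one tangency is dynamically relevant). The one genuine difference is how you close the orbit in configuration (i). You appeal to monotonicity of the first-return map plus compactness to extract a fixed point, which gives the pseudo-cycle only as an asymptotic limit. The paper instead exploits the sliding segment directly: since both visible folds attract the Filippov flow on $(p^-,p^+)$, hypothesis $(Z_2)$ yields at least one pseudo-equilibrium separating the segment, and every orbit entering $\Sigma^s$ on a given side of it is funnelled to the \emph{same} exit tangency ($p^-$ via $X$ or $p^+$ via $Y$). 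Consequently the trajectory becomes exactly periodic after finitely many crossings, and the proof reduces to a finite enumeration of where the return points $p_1^\pm$, $p_2^\pm$ land relative to $p^-$ and $p^+$ --- which also resolves, in a more elementary way, the ``interleaving'' worry you raise, since interleaving is impossible once the slide has collapsed the orbit onto a fixed exit point. Your return-map argument is valid in this setting (forward uniqueness holds on $\Sigma^s$, and the map is weakly monotone there), and it is arguably more robust; the paper's argument buys finite-time closure and an explicit list of the topological types of the resulting tangent and sliding pseudo-cycles, which the authors use later when discussing case 3) in the proof of Theorem \ref{TPB_Euzebio-Juca}.
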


\begin{proof}
	Let $p$ be a point on $K$. Assume that statement (i) hold. From hypotheses $H_1$ and since both $p^+$ and $p^-$ attract trajectories of the Filippov vector field there exist at least one pseudo equilibrium point between $p^-$ and $p^+$. Moreover the trajectory of a point $q\in\Sigma^s$ only leaves $\Sigma$ by $X$ at $p^-$ or by $Y$ at $p^+$. Suppose that $\Gamma_Z(t,p)$ leaves $\overline{\Sigma^s}$ at $p^-=\Gamma_Z(\widetilde{t},p)$ for some $\widetilde{t}>0$ and let $p_1^-\geq p^-$ be on $\Sigma$ be the first point where the positive trajectory of $X$ from $p^-$ meets $\Sigma$. 
	
	Now we have three situations to consider in terms of the tangency points. We start assuming $p_1^-\geq p^+$ (see Figure \ref{fig1-8}). Let $p_2^-\in\Sigma$ be the first point such that the positive trajectory of $Y$ from $p_1^-$ meets $\Sigma$. Then $p^-< p_2^-\leq p^+$ since $\Gamma_Z(t,p)$ by $H_1$. In fact if $p_2^-\leq p^-$ then $\Gamma_Z(t,p)$ only touches $\Sigma$ in sewing points for sufficiently large values of $t$ and therefore the Fundamental Lemma applies. Moreover, clearly the situation $p_2^-> p^+$ cannot occur given the statement (i). If $p_2^-=p^-$ then we obtain a tangent pseudo-cycle whose intersection with $\Sigma$ is the tangency point $p^-$ and the tangency point $p_1^-=p^+$ or a sewing point $p_1^->p^+$.  We get:
	\begin{itemize}
		\item If $p_2^-=p^+$ then we obtain a tangent pseudo-cycle whose intersection with $\Sigma$ is the tangency point $p^+$.
		\item If $p^-<p_2^-<p^+$ then $p_2^-$ cannot be located between two pseudo-equilibrium points once $H_1$ holds. Consequently the Filippov vector field connects $p_2^-$ to either $p^-$ or $p^+$. The first case is exhibited in the Figure \ref{fig1-8} (a). In the second case the positive trajectory of $Y$ connects $p^+$ to $\Sigma^s$ at $p_1^+\geq p_2^-$ (see Figure \ref{fig1-8} (b)). In the first case we obtain a sliding pseudo-cycle. In the second case we obtain a sliding pseudo-cycle topologically different from the first one, or a tangent pseudo-cycle whose intersection with $\Sigma$ is the tangency point $p^+$.
	\end{itemize}

	\begin{center} 
		\includegraphics[scale=0.5]{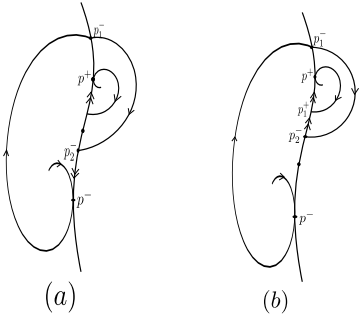}
		\captionof{figure}{The case $p_1^-\geq p^+$.}\label{fig1-8}
	\end{center}
	
	Now we assume $p^-<p_1^-<p^+$. As before, in this case the Filippov vector field must connect $p_1^-$ to either $p^-$ or $p^+$. In the first situation the analysis is the same of the regular-tangent case. In the second one there exist $p_1^+\leq p^+$ the first point on $\Sigma$ such that the positive trajectory of $Y$ from $p^+$ meets $\Sigma$. If $p_1^+<p^-$ then the positive trajectory of $X$ connects $p_1^+$ to $p_2^+\in (p_1^-,p^+]$ since $H_1$ holds. In this case we obtain a sliding pseudo-cycle if $p^-<p_2^+<p^+$ or a tangent pseudo-cycle if $p_2^+=p^+$ (see Figure \ref{fig1-3} (a)). On the other hand, if $p_1^+=p^-$ then we obtain a sliding pseudo-cycle (see Figure \ref{fig1-3} (b)). If $p^-<p_1^+<p^+$ then the Filippov vector field connects $p_1^+$ to either $p^-$ or $p^+$ (see Figure \ref{fig1-3} (c) for the first case) and we obtain a sliding pseudo-cycle. Finally, if $p_1^+=p^+$ then we obtain a tangent pseudo-cycle whose intersection with $\Sigma$ is the tangency point $p^+$ so we are done.
	
	If $p_1^-=p^-$ we obtain a tangent pseudo-cycle whose intersection with $\Sigma$ is the tangency point $p^-$.
	
	\begin{center} 
		\includegraphics[scale=0.5]{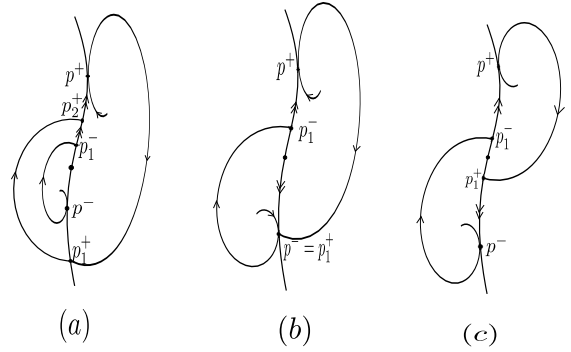}
		\captionof{figure}{The case $p^-<p_1^-<p^+$ where the Filippov vector field connects $p_1^-$ to $p^+$. $(a)\ p_1^+<p^-,\ (b)\ p_1^+=p^-$ and $(c)\ p_1^+>p^-$ and Filippov vector field connects $p_1^+$ to $p^-$.}\label{fig1-3}
	\end{center}
	
	The statement (ii) is treated analogously to the previous case by reversing time in the trajectory $\Gamma_Z(t,p)$. Finally, if statements (iii) or (iv) hold, then we have $\Gamma_Z(t,p)\notin\Sigma^e$ for $t>t_0$ when $\Gamma_Z(t_0,p)\in\Sigma^s$. In this situation the tangent-transversal case applies.
\end{proof}



\subsubsection{The tangent-tangent case assuming $p^-=p^+$}

\begin{lemma}\label{aff4}
	Assume that hypotheses of Theorem \ref{TPB_Euzebio-Juca}, $H_1$ and $H_2$ hold, $X$ and $Y$ have coincident tangency points on $K\cap\s$, $p^- =p^+$. Then $\g_Z(t,p)$ can be taken such that $\omega(\g_Z(t,p))$ is a crossing pseudo-cycle.
\end{lemma}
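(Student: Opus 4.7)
The plan is to exploit the coincidence $p^-=p^+=:p_0$ to collapse the region structure on $\s\cap K$ adjacent to the tangency, then to invoke the Fundamental Lemma to exhibit a maximal trajectory whose $\omega$-limit is a crossing pseudo-cycle. First I would analyze the local picture on $\s\cap K$: since $H_2$ forces the double tangency at $p_0$ to have even order for both $X$ and $Y$, the Lie derivatives $X.f$ and $Y.f$ do not change sign across $p_0$, and since $Z_3$ forbids any other tangency on $\s\cap K$, the two components of $(\s\cap K)\setminus\{p_0\}$ must be of the same transverse type. This leaves three sub-cases: both sides sliding (elliptical double tangency), both sides sewing (parabolic), or both sides escaping (hyperbolic).

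Second, I would eliminate the configurations incompatible with a crossing pseudo-cycle. The elliptical case contradicts $H_1$, since any trajectory that reaches $\overline{\s^s}$ remains there and cannot return to the complement of $\overline{\s^{s,e}}$. The hyperbolic case forces all forward returns to $\s$ to occur at the single tangent point $p_0$, so it only produces tangent pseudo-cycles or chaotic limits and never intersects $\s$ in $\s^c$; the existential phrasing ``can be taken'' allows one to place the trajectory in the remaining parabolic configuration, where both components of $(\s\cap K)\setminus\{p_0\}$ are sewing.

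Third, in the parabolic case $\s^{s,e}\cap K=\emptyset$, hence $\Gamma_Z^+(t,p)\notin\overline{\s^{s,e}}$ for every $t$ and the Fundamental Lemma applies directly; a crossing pseudo-cycle already appears in its list of possible $\omega$-limits. To realize it concretely, I would work with the first return map $\pi:\s\cap K\to\s\cap K$: because the ambient planar flow is orientation-preserving and $\s\cap K$ is identified with the compact interval $[\alpha,\beta]$, the map $\pi$ is well-defined, continuous, and monotone on each side of $p_0$. An intermediate-value (or Brouwer) argument then yields a fixed point $p^*\in\s^c$ distinct from $p_0$. The orbit $\g_Z(t,p^*)$ is periodic, meets $\s$ only at the sewing point $p^*$, and is therefore itself a crossing pseudo-cycle; choosing $p=p^*$ gives $\omega(\g_Z(t,p))=\g_Z(t,p)$ as required.

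The principal obstacle is this last step: one must verify that $\pi$ has a fixed point away from $p_0$ rather than letting all orbits be absorbed by equilibria, graphs or pseudo-graphs, and one must handle the possible discontinuity of $\pi$ at the double tangency caused by trajectories grazing $p_0$. Because the statement is only existential, it is enough to restrict to a subinterval of $\s\cap K$ on which $\pi$ is continuous and invariant and to invoke the planar Poincar\'e--Bendixson machinery (legitimate because the parabolic reduction has eliminated sliding motion) to produce the desired periodic limit.
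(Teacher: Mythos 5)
Your case decomposition of $\Sigma\cap K$ near the double tangency is incorrect, and this is not a cosmetic slip: it removes from view the one configuration where the lemma actually requires an argument. At a tangency of even contact order $n$ the restriction $X.f|_{\Sigma}$ vanishes to odd order $n-1$ at $p_0$, so $X.f$ (and likewise $Y.f$) \emph{does} change sign across $p_0$ along $\Sigma$ --- this is precisely why the trajectory fails to cross there (cf.\ Remark \ref{remark-sigma}). Consequently the product $(X.f)(Y.f)$ keeps its sign, and the two components of $(\Sigma\cap K)\setminus\{p_0\}$ are either both sewing, or one is sliding and the other escaping. Your alternatives ``both sides sliding'' and ``both sides escaping'' cannot occur, while the sliding--escaping configuration, which is the substantive case, never appears in your proof. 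You have also conflated the elliptical/parabolic/hyperbolic trichotomy, which concerns the signs of $X^r.f(p_0)$ and $Y^r.f(p_0)$ (visibility), with the region structure on $\Sigma$, which concerns the signs of $X.f$ and $Y.f$ near $p_0$: an elliptical double tangency sits between two sewing regions (the tangency of type I of Figure \ref{fig1-11}), and a parabolic or hyperbolic one is compatible with the sliding--escaping configuration (this is exactly the setting of Lemmas \ref{aff5} and \ref{aff6}). Hence ``parabolic $\Rightarrow$ both components sewing'' is false, and the elimination step built on it collapses.

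The paper handles the configuration you omit by showing that no pseudo-cycle at all can arise there: a closed maximal trajectory $\gamma$ meeting $\Sigma^s$ cannot be negatively invariant, because $\gamma$ is a meager set while the backward saturation of a sliding segment has non-empty interior; symmetrically for $\Sigma^e$; and if $\gamma$ meets $\Sigma$ only at $p_0$, the extendability of the Filippov field through $p_0$ again destroys invariance, so condition (i) of Definition \ref{def_pseudo-cycle} fails. Only the all-sewing configuration remains, where the Fundamental Lemma yields the crossing pseudo-cycle. Your return-map/intermediate-value construction in that configuration is a reasonable way to realize such a cycle concretely (the paper simply defers to the Fundamental Lemma), and since the statement is existential over the class of systems considered, the fact that the return map need not always have a fixed point is tolerable; the missing sliding--escaping analysis, however, is not.
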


\begin{proof}
	Without loss of generality suppose that $p^-=p^+=0$. Now we get the following possible configurations:
	
	\begin{itemize}
		\item[(i)] $\Sigma^c\cap K=[\alpha,\beta]\setminus\{0\}$;
		\item[(ii)] $\Sigma^s\cap K=[\alpha,0)$ and $\Sigma^e\cap K=(0,\beta]$;
		\item[(iii)] $\Sigma^e\cap K=[\alpha,0)$ and $\Sigma^s\cap K=(0,\beta]$.
	\end{itemize}
	
	If statement (i) holds the crossing pseudo-cycles are obtained as in the Fundamental Lemma. So assume that statement (ii) holds. We claim that in this case there is no pseudo-cycle. Indeed let $\g$ be a closed maximal trajectory of $Z$ such that $\g\cap\s\neq\emptyset$. Assume that it does not contain neither equilibria nor pseudo-equilibria. We will show that $\g$ is neither positively nor negatively invariant and therefore it does not satisfies condition $(i)$ of Definition \ref{def_pseudo-cycle}.

	Notice that the trajectories of the Filippov vector field $Z^\s$ on $[-\varepsilon,0)$ and $(0,\varepsilon]$ have the same direction for $\varepsilon>0$ sufficiently small. If $\g\cap\s^s\neq\emptyset$ then $\g$ is not negatively invariant because $\g$ is a meager set while the saturation of a sliding segment in past time has non empty interior. If $\g\cap\s^e\neq\emptyset$ then the previous argument applies and $\g$ is not positively invariant. If $(\g\cap\s^s)\cup(\g\cap\s^e)=\emptyset$ then $0$ is a visible tangent point for at least one vector field $X$ or $Y$. Since the Filippov vector field is extendable beyond the boundary of $\s^{e,s}$ we have that $\g$ is neither negatively nor positively invariant. It follows that $\g$ is not a pseudo-cycle. The case in that statement (iii) holds is entirely analogous.
\end{proof}



\subsection{Mild Pseudo-cycles and Chaotic Sets of Type I and II}\label{mild-pc_and_chaotics}

In this section we study the mild pseudo-cycles of type I, II and III and its relation with chaotic sets for a DVF as in Theorem \ref{TPB_Euzebio-Juca}.

\begin{proposition}\normalfont
	Every mild pseudo-cycle of type II or III is chaotic of type I or II.
\end{proposition}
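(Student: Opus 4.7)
The plan is to leverage the defining feature shared by type II and type III mild pseudo-cycles, namely that $\Gamma$ contains a proper maximal trajectory $\Gamma' \subsetneq \Gamma$. The existence of such a $\Gamma'$ is equivalent to the presence of at least one \emph{branching point} $q \in \Gamma \cap \Sigma$ at which the concatenation rule defining the flow admits two or more legitimate continuations (typically $q$ sits at the boundary of a sliding or escaping segment and coincides with a tangency where an orbit can either remain in the sliding motion or detach along $X$ or $Y$). This non-uniqueness will be the mechanism that produces chaos.

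First I would observe that, because $\Gamma$ is by construction a finite concatenation of smooth arcs of $X$, $Y$ and $Z^\Sigma$, it is contained in a piecewise smooth 1-dimensional subset of the 2-dimensional manifold $M$; hence its interior in $M$ is empty. Consequently, the invariant candidate $\Lambda$ that I will show to be chaotic --- essentially the saturation of $\Gamma$ under all admissible concatenations, which coincides with $\Gamma$ itself in the type II case and with its natural closure under branching in the type III case --- has empty interior, ruling out chaos of type III. Whether we land in type I or type II is then determined solely by whether $\Gamma \cap (\Sigma^s \cup \Sigma^e)$ is empty or not, which is a direct dichotomy.

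Topological transitivity and density of periodic orbits both reduce to the same combinatorial observation: at $q$, the trajectory can be steered into any prescribed arc of $\Gamma$ by a suitable choice of branch. For transitivity, given open sets $U, V \subset \Lambda$, I would take $p \in U$, follow the trajectory forward until it reaches $q$ (which happens in finite time by $H_1$), and then pick the branch leading into the arc containing $V$; the first-return map to $\Sigma$ controlled by $H_1$ and $H_2$ guarantees we land in $V$. For density of periodic orbits, for any $p \in \Lambda$ and $\varepsilon>0$ I would construct a closed concatenation of arcs passing within $\varepsilon$ of $p$ by choosing at each visit to $q$ the branch that eventually returns to the starting arc. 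Sensitive dependence then follows immediately: two points in a small neighborhood of $q$ lying on opposite sides of the branching decision diverge by a distance comparable to the diameter of $\Gamma$ within a single transit.

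The main obstacle will be pinning down rigorously the candidate invariant set $\Lambda$ in the type III case, where failure of condition $(i)$ means that $\Gamma$ itself is not forward invariant; one must enlarge $\Gamma$ to the closure of all concatenations reachable from it and verify that this enlargement is still 1-dimensional (hence of empty interior) and still carries the dense branching structure needed for transitivity and periodic density. A related subtlety is that the closing-up argument for periodic orbits relies on the return map to $\Sigma$ being a homeomorphism on the relevant arcs, a property that follows from $H_1$, $H_2$ and the Filippov extension of Subsection~\ref{extension}, but which must be invoked carefully in order to avoid a circular appeal to the pseudo-cycle structure we are trying to analyze.
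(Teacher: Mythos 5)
Your verification of the three Devaney conditions is essentially the argument the paper gives: the interior of $\Gamma$ is empty because it is a (piecewise smooth) curve, which rules out type III chaos; the trace of $\Gamma$ is itself a single closed maximal trajectory, which yields transitivity and density of periodic orbits almost for free; and sensitive dependence comes from the point of non-uniqueness guaranteed by the proper maximal sub-trajectory $\widetilde{\Gamma}\subsetneq\Gamma$ — the paper fixes $2r=\sup_{t>0}d(\gamma_1(t,p),\gamma_2(t,p))>0$ for the two branches $\gamma_1=\Gamma$, $\gamma_2=\widetilde{\Gamma}$ through the branching point $p$ and lets nearby points follow different branches. So the core of your proposal is the paper's proof, with the branching point playing exactly the role you assign it.

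The place where you would go wrong is your proposed resolution of the ``main obstacle'' in the type III case. Replacing $\Gamma$ by the closure of all concatenations reachable from it does restore invariance, but the claim that this saturation ``is still 1-dimensional'' is false in precisely the situations where type III mild pseudo-cycles arise: by the construction of Lemma \ref{aff6} (and Lemma \ref{aff5}, case (4)), the saturation of a trajectory shuttling between $\Sigma^s$ and $\Sigma^e$ through a parabolic or hyperbolic double tangency sweeps out a region with non-empty interior — that is exactly how the paper manufactures its chaotic sets \emph{of type III}. So enlarging $\Gamma$ would land you in type III chaos and contradict the conclusion you are trying to prove. The paper avoids this by checking the chaos conditions directly on the curve $\Gamma$ itself (using that the definitions of transitivity, sensitivity and periodic density only require the \emph{existence} of suitable maximal trajectories through points of $\Gamma$, all of which can be taken inside $\Gamma$), and it simply does not address the tension between the failure of condition (i) of Definition \ref{def_pseudo-cycle} and the word ``invariant'' in the definition of a chaotic set. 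If you keep your argument but drop the saturation step and work with $\Gamma$ as the candidate set throughout, you recover the paper's proof.
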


\begin{proof}
	Let $\g$ be a mild pseudo-cycle of type I or II, that is, $(i)$ $\g$ is a closed maximal trajectory of $Z$, $(ii)$ $\g\cap\s\neq\emptyset$, $(iii)$ $\g$ does not contain neither equilibria nor pseudo-equilibria and $(iv)$ $\g$ contains a proper maximal trajectory.
	
	Since (i) holds we have that $int(\g)=\emptyset$. Now we must prove that (a) $Z$ is topologically transitive on $\g$, (b) $Z$ exhibits sensitive dependence on $\g$ and (c) the periodic orbits of $Z$ are dense.
	
	Being $\g$ itself a closed maximal trajectory then (c) holds. To prove (a) and (b) let $\widetilde{\g}\subset\g$ be a proper maximal trajectory and consider $p\in\widetilde{\g}$ a point such that there is no uniqueness of trajectory in $\g$, so (a) holds. Moreover, let $\gamma_1(t,p)=\g$ and $\gamma_2(t,p)=\widetilde{\g}$ be these maximal trajectories, with $\gamma_1(0,p)=\gamma_2(0,p)=p$. Then, we have that $2r=\displaystyle\sup_{t>0}d(\gamma_1(t,p),\gamma_2(t,p))>0$. Given $x\in\g$ if $y\in\g$ is sufficiently close to $x$ we get $\g_Z(t,p)=p=\g_Z(\overline{t},p)$ for $\overline{t}$ sufficiently close to $t$. Therefore we obtain the sensitive dependence in $\g$ from $d(\gamma_1(t,p),\gamma_2(t,p))>r$ for some $t>0$.
\end{proof}

\begin{lemma}\label{aff5}
	Assume that hypotheses of Theorem \ref{TPB_Euzebio-Juca}, $H_1$ and $H_2$ hold and $X$ and $Y$ have coincident tangency points $p^-$ and $p^+$ on $K\cap\s$. Then $\g_Z(t,p)$ can be taken such that $\omega(\g_Z(t,p))$ is a mild pseudo-cycle of type I, II or III.
\end{lemma}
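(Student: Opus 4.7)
The plan is to revisit the three sub-configurations of $\Sigma\cap K$ near the coincident tangency $p^-=p^+=0$ already catalogued in the proof of Lemma \ref{aff4}, namely (i) $\Sigma^c\cap K=[\alpha,\beta]\setminus\{0\}$ (elliptical double tangency of type I), (ii) $\Sigma^s\cap K=[\alpha,0)$ and $\Sigma^e\cap K=(0,\beta]$, and (iii) the reverse of (ii). For each configuration I will exhibit a maximal trajectory $\Gamma_Z(t,p)$ whose $\omega$-limit is a closed maximal trajectory $\Gamma$ through $0$ that fails one or both conditions of Definition \ref{def_pseudo-cycle}, and hence is mild of type I, II or III.

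For configuration (i) no sliding or escaping segment is adjacent to $0$, so any closed maximal trajectory through $0$ is automatically invariant in both time directions and condition (i) of Definition \ref{def_pseudo-cycle} holds. On the other hand, at an elliptical tangency both $X$ and $Y$ have even-order invisible contact with $\Sigma$ at $0$, so a trajectory arriving at $0$ from $\Sigma^+$ along $X$ admits at least two legitimate continuations: continue along $X$ back into $\Sigma^+$, or switch to $Y$ and enter $\Sigma^-$. Applying $H_1$ to the first-return map on a neighborhood of $0$ in $\Sigma$, I will assemble these branches into a closed maximal trajectory $\Gamma$ which strictly contains a second closed maximal trajectory obtained by choosing the alternate branch at $0$. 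Condition (ii) then fails while (i) holds, producing a mild pseudo-cycle of type II.

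For configurations (ii) and (iii) the argument inside Lemma \ref{aff4} already shows that condition (i) fails for any closed maximal trajectory $\Gamma$ through $0$: the Filippov field extends beyond $0$ (Subsection \ref{extension}, case (a)), and the saturation of $\Gamma$ into the adjacent sliding or escape segment has nonempty interior while $\Gamma$ itself is meager, preventing invariance in at least one time direction. To decide whether condition (ii) is also violated I split according to the visibility pattern at $0$. When $0$ admits a unique continuation through the extended Filippov field (the parabolic situation where the one-sided limits of $Z^\Sigma$ point consistently), $\Gamma$ contains no proper maximal sub-trajectory and one obtains a mild pseudo-cycle of type I. When instead $0$ admits genuine branching (hyperbolic, or parabolic with opposed one-sided limits so that case (b) of Subsection \ref{extension} applies), a second closed maximal trajectory sits properly inside $\Gamma$ and one obtains a mild pseudo-cycle of type III. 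In each sub-case the initial point $p$ is chosen so that the first-return sequence on $\Sigma$ guaranteed by $H_1$ accumulates at $0$, which yields $\omega(\Gamma_Z(t,p))=\Gamma$.

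The main obstacle is the branching analysis at the coincident tangency. To legitimately claim that condition (ii) fails in configurations (i) and in the relevant sub-case of (ii)--(iii), I must verify, via the contact-order calculations underlying the Filippov extension in Subsection \ref{extension}, that the two candidate continuations through $0$ are distinct smooth arcs and not reparametrizations of the same curve, so that the alternative closed trajectory is a strict subset of $\Gamma$ rather than the same set traced differently. A secondary subtlety is producing the accumulating return sequence at $0$ from $H_1$ in each sub-case; this reduces to a one-dimensional Poincar\'e--Bendixson-type packing argument applied to the first-return map on the interval parametrizing $\Sigma\cap K$, exactly as invoked in the proofs of Lemmas \ref{aff1}--\ref{aff3}.
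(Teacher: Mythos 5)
Your overall architecture (split into the three configurations of $\Sigma\cap K$ around $p^-=p^+=0$, then exhibit closed maximal trajectories through $0$ violating conditions (i) and/or (ii) of Definition \ref{def_pseudo-cycle}) matches the paper's, but there is a genuine error at the start: configuration (i), $\Sigma^c\cap K=[\alpha,\beta]\setminus\{0\}$, is incompatible with $H_1$. In that configuration $\Sigma^s\cup\Sigma^e$ is empty on $K$, so a positive maximal trajectory confined to $K$ cannot return to $\overline{\Sigma^{s,e}}$ infinitely many times; the paper dismisses this case in one line. Your entire construction of a mild pseudo-cycle of type II lives inside this vacuous case, so it establishes nothing. (Since the lemma only asserts a disjunction of types I, II, III, losing the type II branch does not by itself sink the statement, but a conclusion derived from jointly contradictory hypotheses cannot be presented as a realized object.) For the same reason you must explicitly exclude the elliptic double tangency in configurations (ii) and (iii) — the paper shows $H_1$ forces $0$ to be parabolic or hyperbolic because an elliptic point attracts the Filippov flow on $\s^e$ and repels it on $\s^s$, killing the infinitely many transitions — whereas your "visibility pattern" split silently assumes non-elliptic contact.

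The second, more substantive gap is that your dichotomy "unique continuation versus genuine branching at $0$" classifies a closed curve once it is in hand, but does not construct the $\omega$-limit set. The paper's proof turns on a different decomposition: whether $\Gamma_Z^+(t,p)$ visits $\Sigma^s$ and $\Sigma^e$ finitely or infinitely many times. Finitely many visits to both yields a union of periodic orbits of $X$ and/or $Y$ through $0$; finitely many visits to one of them forces all later returns to occur through the visible tangency and identifies the exit points $q_1^\pm\in\overline{\s^e}$; infinitely many visits to both requires locating the nearest pseudo-equilibria (or boundary points) $p_s$, $p_e$ and showing the extended Filippov field carries $p_s$ to $p_e$, so the limit curve genuinely contains a sliding and an escaping segment. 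Which arcs compose the limit curve — one loop, two loops, or loops plus segments of $\s^{s,e}$ — is exactly what determines the type, and none of it is recoverable from the local branching structure at $0$ alone; your appeal to "a one-dimensional Poincaré--Bendixson-type packing argument" is precisely where that work is being waved away. Finally, the sub-case you label "parabolic with opposed one-sided limits so that case (b) of Subsection \ref{extension} applies" needs separate care: there the Filippov field does not extend through $0$, and if both one-sided limits point toward $0$ the trajectory is absorbed and $H_1$ fails, so it cannot be folded into the same branching count that produces your type III object.
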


\begin{proof}
	Without loss of generality suppose that $p^- = p^+ = 0$. Since this is the unique tangency point of $Z$ with $\s$ on $K$ being of even order for both vector fields $X$ and $Y$, we have the following possible configurations:
	
	\begin{itemize}
		\item[(i)] $\Sigma^c\cap K=[\alpha,\beta]\setminus\{0\}$;
		\item[(ii)] $\Sigma^s\cap K=[\alpha,0)$ and $\Sigma^e\cap K=(0,\beta]$;
		\item[(iii)] $\Sigma^e\cap K=[\alpha,0)$ and $\Sigma^s\cap K=(0,\beta]$.
	\end{itemize}
	
	Since $H_1$ holds, statement (i) cannot occur. Suppose that statement (ii) holds. Again from $H_1$ the double tangency $0$ cannot be elliptic. In fact, if $0$ is an elliptic tangency point then it attracts the Filippov trajectory on $\s^e$ and repels its on $\s^s$. Since $0$ is the unique tangent point on $\s\cap K$ and the positive trajectory of $\g=\g_Z(t,p)$ is contained entirely on $K$, $H_1$ is not satisfied, that is, it cannot occur infinitely many transitions of $\g$ with $\Sigma$ by $\overline{\s^e\cup\s^s}$. Therefore $0$ is a parabolic or hyperbolic tangent point.
	
	Notice that the Filippov vector field on $[-\varepsilon,0)$ and $(0,\varepsilon]$ have the same direction for $\varepsilon>0$ sufficiently small. We split the analysis into the following cases:
	
	\begin{itemize}
		\item[(1)] $\Gamma_Z^+(t,p)$ visits $\Sigma^s\cup\s^e$ a finite number of times;
		\item[(2)] $\Gamma_Z^+(t,p)$ visits $\Sigma^s$ a finite number of times and visits $\s^e$ an infinite number of times;
		\item[(3)] $\Gamma_Z^+(t,p)$ visits $\Sigma^s$ an infinite number of times and visits $\s^e$ a finite number of times;
		\item[(4)] $\Gamma_Z^+(t,p)$ visits both $\Sigma^s$ and $\s^e$ an infinite number of times.
	\end{itemize}
		
	Suppose that situation (1) occurs.  Since $H_1$ holds there exist $t_0>0$ such that $\g_Z(t,p)\cap\s=\{0\}$ for $t>t_0$ and $\g_Z^+(t+t_0,p)$ is a periodic orbit of $X$ or $Y$ or a union between them. In the first two cases we obtain a mild pseudo-cycle of type I and in the third case we obtain a mild pseudo-cycle of type III (see Figure \ref{fig1-5} (c)).
	
	\begin{center} 
		\includegraphics[scale=0.35]{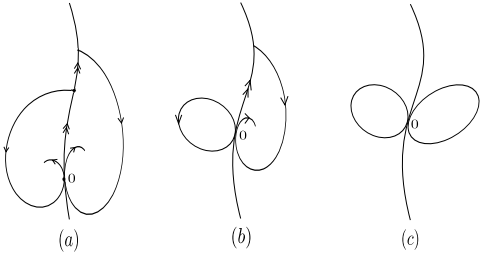}
		\captionof{figure}{Examples of mild pseudo-cycles.\label{fig1-5}}
	\end{center}
	
	Now, suppose that situation (2) occurs. Then there exist $t_0>0$ such that $\g_Z(t,p)\notin\s^s$ for $t>t_0$. Since there are infinitely many transitions of $\g_Z(t,p)$ from and to $\s$, the returns of $\g_Z(t,p)$ to $\s$ with $ t>t_0$ occur through the visible tangency point $0$. If $\g_Z(t,p)$ returns to $0$ by $X$ or $Y$ there is $q_1^\pm\in\overline{\s^e}$ the point for which the past of $0$ by $X$ or $Y$ meets $\s$. Moreover, at least one of the points $q_1^-$ or $q_1^+$ belong to $\s^e$ and, for $t>t_0$ we have that $\g_Z(t,p)$ leaves $\s^e$ only by $q_1^-$ according to $X$ or by $q_1^+$ according to $Y$, see Figures \ref{fig1-5} (a) and (b). The tangent point $0$ attracts the trajectories of the Filippov vector field on $\s^s$ and repels them on $\s^e$ in a neighborhood of $0$ in this case. Thus we obtain a mild pseudo-cycle of type I if there exist only one of $q_1^-$ or $q_1^+$ and a mild pseudo-cycle of type III if there exists both $q_1^-$ and $q_1^+$. The case (3) is analogous by reversing orientation on time.

	Finally, suppose that situation (4) occurs. Let $p_s'$ and $p_e'$ be the frontier of the compact set $K\cap\s$ in $\s^s$ in $\s^e$, respectively. Fix $p_s=\max\{\widetilde{p_s},p_s'\}$ and $p_e=\min\{\widetilde{p_e},p_e'\}$, where $\widetilde{p_s}\in\s^s$ and $\widetilde{p_e}\in\s^e$ are the nearest pseudo-equilibrium points of $0$.  If there is no pseudo-equilibrium on $K\cap\s$ then $p_s=p_s'$ and $p_e=p_e'$, respectively. Therefore $\g_Z(t,p)\cap\s^s\subset [p_s,0)$, $\g_Z(t,p)\cap\s^e\subset (0,p_e]$ and the extended Filippov vector field connects $p_s$ to $p_e$ in future time. 
	Thus we can take $\g_Z^+(t+t_0,p)$ as being a closed trajectory, for some $t_0>0$, that is neither positively nor negatively invariant. Follows that $\omega(\g_Z(t,p))$ is a mild pseudo-cycle of type II or III and we are done.
\end{proof}


\subsection{Chaotic set of type III and pseudo-graphs}\label{chaotic}

In this section we study the chaotic sets of type III and those pseudo-graphs that are $\omega$-limit of some maximal trajectory in the sense of Theorem \ref{TPB_Euzebio-Juca}.

Suppose that $(\s^s\cup\s^e)\cap K\neq\emptyset$. Let $\Omega_g$ be the set of pseudo-graphs and $\Omega^*\subset\Omega_g$ be the subset of pseudo-graphs contained on $K$ that are $\omega$-limit of some maximal trajectory of $Z$ whose positive part is contained on $K$. That distinction is necessary because some pseudo-graphs are not $w-$limit of any trajectory.

Next result provides a partial description of the elements of $\Omega^*$.

\begin{proposition}\label{prop-pseudo-grafico}
	Assume that hypotheses of Theorem \ref{TPB_Euzebio-Juca} hold and let $\g\in\Omega^*$ such that $\g\cap (\s^s\cup\s^e)\neq\emptyset$, that is, $\g$ is a pseudo-graph of $Z$ on $K$ which is the $\omega$-limit of a maximal trajectory of $Z$ with sliding motion. Then, $X$ and $Y$ have coincident tangency points $p^-=p^+$ on $K$.
\end{proposition}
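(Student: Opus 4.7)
The plan is a proof by contradiction that exhausts the configurations of tangency points on $K\cap\s$ and, in each one, invokes the matching result of Subsection~\ref{pseudo-cycles} or the Fundamental Lemma to force $\omega(\g_Z(t,p))$ into a class of closed sets incompatible with the pseudo-graph structure of $\g$. The underlying dichotomy is Definition~\ref{def_pseudo-cycle}: pseudo-cycles explicitly forbid equilibria and pseudo-equilibria, whereas pseudo-graphs are precisely closed curves whose vertices are such distinguished points. Thus any sub-case whose conclusion is ``$\omega(\g_Z(t,p))$ is a pseudo-cycle'' immediately rules out ``$\omega(\g_Z(t,p))=\g$ is a pseudo-graph''.

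Suppose, towards a contradiction, that the tangencies of $X$ and $Y$ on $K\cap\s$ fail to coincide as a common point: either $p^-\neq p^+$ both exist, or one or both of $p^\pm$ is missing. First, if hypothesis $H_1$ fails then Remark~\ref{Remark} reduces the analysis to the Fundamental Lemma, in whose first branch the only pseudo-graph possibility arises under the condition that $\g_Z(t,p)\notin\overline{\s^e\cup\s^s}$ for large $t$; this forces the resulting pseudo-graph to miss $\s^s\cup\s^e$, contradicting $\g\cap(\s^s\cup\s^e)\neq\emptyset$, and the second branch yields only pseudo-equilibria or tangencies of type~II, none of which is a closed pseudo-graph. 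So it suffices to argue under $H_1$.

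Under $H_1$, I would split on the configuration of tangencies. If exactly one vector field has a tangency, Lemma~\ref{aff1} applies and gives $\omega(\g_Z(t,p))$ as a tangent or sliding pseudo-cycle. If both tangencies exist and are distinct, the invisible--invisible sub-case is incompatible with $H_1$ as observed at the start of Subsection~\ref{p+difp-}, and the remaining invisible--visible and visible--visible sub-cases fall under Lemmas~\ref{aff2} and \ref{aff3}, producing again crossing, tangent, or sliding pseudo-cycles. If neither vector field has a tangency, continuity and non-vanishing of $Xf$ and $Yf$ on the connected curve $\s\cap K$ force this curve to be entirely of one transversal type: purely sewing prevents any sliding arc in the limit, purely sliding traps any incoming trajectory on $\s^s$ and contradicts the infinitely many departures demanded by $H_1$, and purely escape combined with $(Z_2)$ forces the extended Filippov dynamics on $\s^e$ to drive the $\omega$-limit to a single pseudo-equilibrium rather than to a nondegenerate closed curve.

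Putting these pieces together, $\omega(\g_Z(t,p))$ is in every case either a pseudo-cycle, a pseudo-equilibrium, or a tangency point; by Definition~\ref{def_pseudo-cycle} and the definition of pseudo-graph, none of these is compatible with $\g\in\Omega^*$ having $\g\cap(\s^s\cup\s^e)\neq\emptyset$. This contradiction forces $p^-=p^+$ on $K$. The principal difficulty lies in the no-tangency, pure-escape sub-case: here one must carefully analyze the extended Filippov vector field on $\s^e$ together with the isolated pseudo-equilibria furnished by $(Z_2)$ in order to rule out nondegenerate pseudo-graph limits. The remaining sub-cases reduce to direct applications of Lemmas~\ref{aff1}--\ref{aff3} and amount essentially to bookkeeping.
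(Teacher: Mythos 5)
Your argument is correct in substance, but it takes a genuinely different route from the paper. You argue by exhausting the tangency configurations on $K\cap\s$ and invoking Lemmas \ref{aff1}--\ref{aff3} (together with the Fundamental Lemma when $H_1$ fails) to conclude that in every non-coincident configuration the $\omega$-limit is a pseudo-cycle, a pseudo-equilibrium or a tangency point, and then you use the definitional disjointness of pseudo-cycles (no equilibria or pseudo-equilibria) and pseudo-graphs (arcs joining such points) to reach a contradiction. The paper instead gives a short direct argument localized at a single point $q\in\g\cap\s^s$: since $\g$ is an $\omega$-limit, the trajectory accumulates at $q$, is absorbed into $\s^s$ nearby, and must exit through a visible tangency, say $p^-$, with no pseudo-equilibria between $q$ and $p^-$; if $p^-\neq p^+$ then $\s^s$ never connects to $\s^e$, the sliding exit is deterministic, and the trajectory closes up into a regular periodic (sliding) orbit whose $\omega$-limit is itself, not a pseudo-graph. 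Your version buys modularity --- it reuses the classification lemmas and needs no new local analysis --- at the cost of depending on their full strength and of a longer case list; the paper's version is self-contained and isolates the actual mechanism, namely that non-coincident tangencies destroy the $\s^s$-to-$\s^e$ connection that a sliding pseudo-graph requires. One spot in your write-up is thinner than it should be: the claim that a pseudo-graph produced by the first branch of the Fundamental Lemma must miss $\s^s\cup\s^e$ is true but not automatic; it requires the observation (which is precisely the opening move of the paper's proof) that if the $\omega$-limit contained a point of $\s^s$ then local attraction of the sliding region would force the trajectory into $\overline{\s^s}$ in finite forward time, contradicting $\Gamma_Z(t,p)\notin\overline{\Sigma^e\cup\Sigma^s}$ for large $t$, with the time-reversed argument for $\s^e$. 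Supplying that short justification closes the only real gap in your proposal.
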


\begin{proof}
	Initially suppose there exist $q\in\g\cap\s^s$. Since $\g=\omega(\g_Z(t,p))$ there exist a sequence $p_n=\g_Z(t_n,p)$ converging to $q$, where $t_n\to\tau^+(p)$ when $n\to+\infty$. Notice that there exist a neighborhood $V_q$ of $q$ such that every point in $V_q$ meets $\s^s$ in future finite time. So take $q_n=\g_Z(t_n^*,p_n)\in\s^s$ with $t_n^*>0$. Since $p_n\to q$ we get $q_n\to q$. Moreover, the future of all points $q_n$ for $n$ sufficiently large visits a visible tangency point, that we suppose without loss of generality to be $p^-$. So it follows that there are no pseudo-equilibria between $q$ and $p^-$. If $p^-\neq p^+$ then $\s^s$ does not connects with $\s^e$ and the trajectory $\g_Z(t,p)$ is regular periodic whose $\omega$-limit is itself and therefore $\omega(\g_Z(t,p))$ is not a pseudo-graph. An analogue contradiction is obtained for a pseudo-graph $\g$ intersecting $\s^e$ by reversing time or in the regular-tangent case. Therefore $X$ and $Y$ have coincident tangency points $p^-=p^+$ on $K$.
\end{proof}

Next result is a partial converse of \ref{prop-pseudo-grafico}.

\begin{lemma}\label{aff6}
	Assume that hypotheses of Theorem \ref{TPB_Euzebio-Juca}, $H_1$ and $H_2$ hold and $X$ and $Y$ have coincident tangency points $p^-$ and $p^+$ on $K\cap\s$. Then $\g_Z(t,p)$ can be taken such that $\omega(\g_Z(t,p))$ is a chaotic set $\Lambda$ of type III. Moreover, if $\mathcal{G}=\partial\Lambda\cup[\Lambda\cap\Sigma]$ contains any equilibrium or pseudo-equilibrium, then $\mathcal{G}$ is a pseudo-graph and $\g_Z(t,p)$ can be taken such that $\omega(\g_Z(t,p))$ $=\mathcal{G}$.
\end{lemma}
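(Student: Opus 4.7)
The plan is to exploit the parabolic or hyperbolic character of the coincident tangency $p^- = p^+$ to build a set whose interior is swept out by the non-deterministic transitions between $\Sigma^s$ and $\Sigma^e$, and then verify the three Devaney-style chaos conditions plus the existence of a maximal trajectory realising it as an $\omega$-limit. First, I would repeat the opening of the proof of Lemma \ref{aff5} to reduce the configuration: $H_1$, $H_2$ and the uniqueness of the tangency on $K\cap\Sigma$ rule out configuration (i) and the elliptic sub-case, so without loss of generality $\Sigma^s\cap K=[\alpha,0)$, $\Sigma^e\cap K=(0,\beta]$ and $0$ is parabolic or hyperbolic. Under this configuration the Filippov vector field extends through $0$ as described in Subsection \ref{extension}, connecting $\Sigma^s$ to $\Sigma^e$.

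Next, I would define $\Lambda$ as the closure of the union of all maximal trajectories that reach $0$ through $\Sigma^s$ via the extended Filippov vector field and leave again through $\Sigma^e$, detaching from $\Sigma$ at arbitrary instants. Since the departure from $\Sigma^e$ is non-deterministic, varying this detachment instant produces a one-parameter family of trajectory arcs that fills an open region, so $\mathrm{int}\,\Lambda \neq \emptyset$. This makes $\Lambda$ a candidate for a chaotic set of type III.

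Then I would verify the three chaos conditions. Given non-empty open sets $U,V \subset \Lambda$, the freedom of detachment times allows one to construct a maximal trajectory visiting $U$ and later $V$, giving topological transitivity. For each $q$ in the interior of $\Lambda$ there is a detachment choice producing a closed concatenation through $q$, because every escape trajectory is fed back to $0$ by the sliding segment; varying $q$ yields a dense family of periodic maximal trajectories. Sensitive dependence follows from the fact that two trajectories through a common point which detach from $\Sigma^e$ at different nearby instants diverge by an amount bounded below by a uniform constant related to the diameter of the region swept by $\Lambda$. The existence of a maximal trajectory with $\omega$-limit exactly $\Lambda$ is obtained by enumerating a dense countable set of detachment sequences along $\Sigma^e$, which is possible since $\Gamma_Z^+(t,p) \subset K$ and every neighbourhood in $\Lambda$ is hit infinitely often.

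Finally, regarding the pseudo-graph statement, $\partial \Lambda$ is by construction a concatenation of arcs of $X$, $Y$ and $Z^\Sigma$ corresponding to the extreme detachment choices, glued at points of $\Sigma$. If any such gluing point is an equilibrium of $X$ or $Y$ or a pseudo-equilibrium of $Z$, then $\mathcal{G} = \partial \Lambda \cup [\Lambda \cap \Sigma]$ fits the definition of pseudo-graph. To realise $\mathcal{G}$ as the $\omega$-limit of some maximal trajectory, I would prescribe detachment times that converge to the extreme values producing $\partial \Lambda$, while arranging that each sliding visit sweeps $\Lambda \cap \Sigma$ densely; the approach to a singular point slows the trajectory indefinitely, enlarging the accumulation set to all of $\mathcal{G}$. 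The main obstacle I foresee is precisely this last step: synchronising the non-deterministic detachments with the arbitrarily slow passage near the singular point so that the accumulation set is \emph{exactly} $\mathcal{G}$, neither less nor more. This likely requires a Cantor-like coding of detachment instants along $\Sigma^e$, in the spirit of the non-trivial minimal set constructions cited in the introduction.
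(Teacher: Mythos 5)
Your proposal follows essentially the same route as the paper: build $\Lambda$ as the closure of the two-dimensional region swept by the non-deterministic continuations through the parabolic/hyperbolic tangency $0$, verify Devaney's conditions via the recurrence of every interior point through $0$, and realise $\partial\Lambda$ (hence $\mathcal{G}$) as an $\omega$-limit by letting the detachment points converge to the extremal values $q_e^{\pm}$. The synchronisation issue you flag at the end is handled in the paper by exactly the continuity-with-respect-to-initial-conditions plus compactness argument you sketch, with no Cantor-type coding invoked.
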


\begin{proof}
	We will split the proof into four steps. Initially we will construct the set $\Lambda$ (Step 1) and then we show that there exists maximal trajectories in such that $\Lambda$ and $\mathcal{G}$ are their $\omega$-limit sets (Steps 2 and 3, respectively). Finally we show that $\Lambda$ is a chaotic set of type III (Step 4).

\smallskip
\textit{\bf Step 1: Construction of $\Lambda$.}

	Without loss of generality suppose that $p^-=p^+=0$. Notice that from hypotheses $H_1$, $0$ cannot be an elliptic tangency point as we verify in the beginning of the proof of Lemma \ref{aff4}. Then we will construct the set $\Lambda$ assuming that $0$ is parabolic. The hyperbolic case will follow naturally from this case. Let $p_s$ and $p_e$ be points such as in the end of the proof of Lemma \ref{aff4} and let $q_e^+\in(0,p_e]$ be determined as follows (see Figure \ref{fig1-6}). If $q\in(0,q_e^+)$ then the future of $q$ by the flow of $Y$ meets $[p_s,0)\in\s^s$, say at $q^+$, and notice it cannot occur to the another initial condition $q>q_e^+$ on $\s^e$ (except possibly if $q_e^+=p_e$).  Notice also that the region delimited by the arc of trajectory of $Y$ with boundary $q\in(0,q_e^+)$ and $q^+$ joining the segment $[q^+,q]\subset\s$ contains no equilibrium point of $Y$. The portion of $\Lambda$ contained on $\s^+$ is then defined as being the closure of the union of these regions.

\begin{center} 
	\includegraphics[scale=0.5]{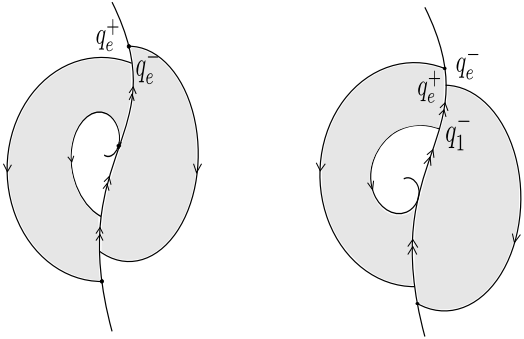}
	\captionof{figure}{Chaotic set of type III.\label{fig1-6}}
\end{center}

	Now we proceed to describe the portion of $\Lambda$ contained on $\s^-$ containing the visible tangency point. If neither the past or future of the tangency point $0$ by $X$ meets $[p_s,p_e]$, then $\Lambda\cap\s^-=\emptyset$. Suppose that the past of $0$ by $X$ meets $(0,p_e)$ at $q_1^-$ and let $q_e^-\in[q_1^-,p_e]$ be defined as follows. If $q\in(q_1^-,q_e^-)$ then the future of $q$ by $X$ meets $[p_s,0)$, say at $q^-$. Again, it cannot happen to the another initial condition $q>q_e^-$ (except possibly if $q_e^-=p_e$). Notice that in this case there is no equilibrium point in the region on $\s^-$ delimited by the arc of trajectory of $X$ with boundaries $q$ and $q^-$ joining the segments $[q^-,0]\cup[q_1^-,q_e^-]$ (see Figure \ref{fig1-6}). The portion of $\Lambda$ contained on $\s^-$ is then defined as being the closure of the union of these regions.
	
	In case the future of $0$ by the trajectory of $X$ meets $(p_s,0)$, say at $p_s^-$, the approach is similarly to the previous one by looking to the past of trajectories with initial condition on $[p_s,q_s^-)$.

\smallskip
\textit{\bf Step 2: $\Lambda$ is the $\omega$-limit of some maximal trajectory achieving the hypothesis of Lemma \ref{aff5}.}

	From the previous construction we see that $0\in\Lambda$ is the common boundary between the sliding and the escaping region. Moreover, for any interior points $r,s$ of $\Lambda$ there exists $t_r>0$, $t_s<0$ and trajectories $\Gamma_Z^r$ and $\Gamma_Z^s$ such that $\Gamma_Z^r(t_r,r)=\Gamma_Z^s(t_s,s)=0$. Hence any two points on the interior can be connected by an arc of some maximal trajectory. Therefore the closure of $\Lambda$ is the $\omega-$limit set of a maximal trajectory.

\smallskip
\textit{\bf Step 3: $\partial\Lambda$ is the $\omega$-limit of some maximal trajectory achieving the hypothesis of Lemma \ref{aff5}.}

	Consider an arbitrary point $p\in\Lambda$ and sequences $\{p_n\}\subset(0,q_e^+)$ and $\{q_n\}\subset(0,q_e^-)$ converging to $q_e^+$ and $q_e^-$, respectively. Consider also the maximal trajectory $\g_Z(t,p)$ given as follows. Starting at $p$, there exist $t_0>0$ such that $\g_Z(t_0,p)=0$. Due to the recurrence through $0$ there exists two sequences $0<t_0<s_1<t_1<s_2<t_2<\cdots<s_n<t_n<\cdots$ with $t_n\to\infty$ such that $\g_Z(s_n,p)=p_n,\ \g_Z(t_n,p)=q_n$. Therefore $\g_Z(t,p)$ leaves $\s^e$ by $Y$ at $p_n$ for $t=s_n$. By continuity of trajectories respect to initial conditions and using the compactness of $\Lambda$ it follows that $\g_Z(t,p)$ can be taken such that $\omega(\g_Z(t,p))=\mathcal{G}\subset\partial\Lambda\subset\Lambda$. Moreover, if $\mathcal{G}$ contains no equilibrium or pseudo equilibrium we have that it is a pseudo-graph.

\smallskip
\textit{\bf Step 4: $\Lambda$ is a chaotic set of type III.}

	Let $U$ and $W$ be non-empty open subsets of $\Lambda$. For a point $p\in U$ there exist a positive time $t_0>0$ such that $\Gamma_Z(t_0,p)=0$. Since the positive trajectories of $Z$ contained on $\Lambda$ starting at $0$ are dense on $\Lambda$ by construction, there exist $t_1> 0$ such that $\Gamma_Z(t_1,0)\in W$. Then $\Gamma_Z(t_0+t_1,p)\in W$. Therefore $\Lambda$ is topologically transitive. Analogously we show the density of periodic orbits on $\Lambda$. For the sensitive dependence of $Z$ on $int(\Lambda)$, we notice that for $x$ sufficiently close to $y$ on $int(\Lambda)$, there exist a time $t>0$ sufficiently close to $s>0$ such that $\Gamma_Z(t,x)=\Gamma_Z(s,y)=p_0$. If $r=\displaystyle\frac{1}{2}diam(\Lambda)$ then there exists trajectories $\Gamma_Z(t,x)$ and $\Gamma_Z(t,y)$ of $Z$ such that $d(\Gamma_Z(T,x),\Gamma_Z(T,y))>r$ for some $T>0$. Moreover, clearly the interior of $\Lambda$ is non-empty.
\end{proof}

\begin{remark}\normalfont
	We see that $\Lambda$ cannot be a minimal set since it is either non-invariant or there always exist a non-empty invariant proper subset. 
\end{remark}

As an immediate consequence of the previous discussion we have the following result:

\begin{proposition}\label{cor_chaos}\normalfont 
	Assume that hypotheses of Theorem \ref{TPB_Euzebio-Juca} hold and that $Z$ is chaotic on the compact set $K$. Then, 
	\begin{itemize}
		\item[(i)] there exist a parabolic or hyperbolic (double) tangency point on $K$;
		\item[(ii)] $\Sigma^c\cap K=\emptyset$;
		\item[(iii)] there exist a maximal trajectory $\Gamma_Z(t,p)$ and a strictly increase sequence of times $t_n,s_n>0$ such that $\g_Z(t_n,p)\in\s^e$ and $\g_Z(s_n,p)\in\s^s$.
	\end{itemize}
\end{proposition}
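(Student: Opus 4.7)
The strategy is to argue by elimination, exploiting the systematic classification of $\omega$-limit sets developed throughout this section. Since $Z$ is chaotic on $K$, there exists a maximal trajectory $\Gamma_Z(t,p)$ whose $\omega$-limit set is a chaotic set (of type I, II, or III) inside $K$. I would then traverse Lemmas \ref{aff1}--\ref{aff6} and rule out every configuration incompatible with chaos, extracting (i), (ii), (iii) from the single surviving scenario.

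\emph{Claim (i).} The regular--tangent case (Lemma \ref{aff1}), the non-coincident tangent--tangent cases (Lemmas \ref{aff2} and \ref{aff3}), and the coincident tangent--tangent case in the sewing-only configuration (case (i) of Lemma \ref{aff4}) all yield $\omega$-limit sets that are pseudo-cycles (crossing, tangent, or sliding) or strictly simpler objects; none of these is chaotic. Hence chaos forces coincident tangencies $p^- = p^+$ with local configuration (ii) or (iii) of Lemmas \ref{aff4}--\ref{aff5}. Moreover, as observed at the opening of the proof of Lemma \ref{aff5}, hypothesis $H_1$ rules out an elliptic double tangency, because an elliptic point attracts from $\Sigma^e$ and repels from $\Sigma^s$, precluding infinitely many transitions across $\overline{\Sigma^s \cup \Sigma^e}$. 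The surviving possibility is a parabolic or hyperbolic double tangency, giving (i).

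\emph{Claims (ii) and (iii).} In configurations (ii) and (iii) of Lemma \ref{aff5}, an entire neighborhood of the double tangency on $\Sigma$ consists of $\Sigma^s$ and $\Sigma^e$, with no sewing segment. The chaotic set $\Lambda$ constructed in Lemma \ref{aff6} is a union of arcs of trajectories of $X$ and $Y$ that repeatedly connect $\Sigma^s$ to $\Sigma^e$ through the double tangency, and by construction $\Lambda \cap \Sigma \subset \overline{\Sigma^s \cup \Sigma^e}$. Since $Z$ is chaotic on $K$, the compact invariant set $K$ must coincide with such a set $\Lambda$, which forces $\Sigma^c \cap K = \emptyset$ and proves (ii). Finally, the maximal trajectory realizing $\Lambda$ is exactly situation (4) in the proof of Lemma \ref{aff5}: it visits both $\Sigma^s$ and $\Sigma^e$ infinitely often. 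Recording the hitting times with each region delivers the strictly increasing sequences $\{t_n\}$ and $\{s_n\}$ asserted in (iii).

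\emph{Main obstacle.} The delicate step is the argument for (ii): one must preclude a hybrid $K$ containing both a chaotic piece built as in Lemma \ref{aff6} and an auxiliary region meeting $\Sigma^c$. Such an extra piece would either be dynamically disjoint from the chaotic component (contradicting topological transitivity and density of periodic orbits on $K$ viewed as a single chaotic set) or would support transitions through the sewing region incompatible with the classification produced by Lemmas \ref{aff1}--\ref{aff6}. Making this invariance-and-transitivity argument precise—so that the global absence of sewing on $K$ follows from purely local information near the double tangency—is the heart of the proof.
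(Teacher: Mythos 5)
Your proposal is correct and follows essentially the same route as the paper: the authors give no standalone proof, stating the proposition as ``an immediate consequence of the previous discussion,'' and your elimination through Lemmas \ref{aff1}--\ref{aff6} (non-coincident or regular-tangent configurations yield only pseudo-cycles, the elliptic double tangency is excluded by $H_1$, and the surviving parabolic/hyperbolic coincident case produces the sliding--escaping transitions) is precisely that discussion made explicit. The ``main obstacle'' you flag for item (ii) is in fact already dispatched by the standing hypotheses rather than by a transitivity argument: under $(K_2)$ and $(Z_3)$ the set $\Sigma\cap K$ is a single curve carrying one coincident tangency of even order for both $X$ and $Y$ (by $H_2$), so the product $(X.f)(Y.f)$ has constant sign on $(\Sigma\cap K)\setminus\{p^-\}$, forcing $\Sigma\cap K$ to be either entirely sewing or entirely sliding/escaping, and the former is incompatible with the chaotic configuration established in item (i).
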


In particular, Proposition \ref{cor_chaos} states that, under the hypotheses of Theorem \ref{TPB_Euzebio-Juca}, if a DVF is chaotic them it is structural unstable. It holds due to the connection of tangency points on $\Sigma$ which is clearly destroyed by suitable small perturbations.


%
%
%
%
%

\section{Proof of the main results}\label{proof}

Now we synthesize the analysis on the previous section in order to establish the proof of the main results of the paper.

\begin{proof}[Proof of Fundamental Lemma.]
	From Remark \ref{Remark} it is sufficient we consider the planar DVF case. First assume that $\Gamma_Z(t,p)\cap\overline{\Sigma^e\cup\Sigma^s}\neq\emptyset$. In this case, by using the proof provided in \cite{Buzzi-Carvalho-Euzebio18} we get that $\omega(\Gamma_Z(t,p))$ is one of items from (i) to (vii).
	
	Assume now that 
	$\Gamma_Z(t,p)\notin\overline{\Sigma^e\cup\Sigma^s},\ \forall t>t_0$. Since we are interested in limit sets we only need to consider $t\in\R$ arbitrarily large, therefore we also get items (i) to (vii) in the last situation.
	
	On the other hand, given a fixed $T\in\mathbb{R}$, if $\Gamma_Z(t,p)\in\Sigma^e\cup\Sigma^s,\ \forall t>T$ and since $\Gamma_Z^+(t,p)$ is contained on a compact set it follows that $\omega(\Gamma_Z(t,p))$ must accumulate in a point on $q\in\overline{\Sigma^s\cup\Sigma^e}$. If such a point belongs to $\Sigma^s\cup\Sigma^e$ then it is a pseudo-equilibrium, so we get statement $(viii)$. If $q$ belongs to the boundary of $\Sigma^s\cup\Sigma^e$ then $q$ must be a tangency point. From that point we could concatenate other trajectories of $X$ or $Y$ to $q$ unless $q$ is an invisible tangency point for a vector field and a visible tangency of odd order to another, being $q$ reached in finite positive time in the last situation. Under this configuration if $q$ is an attractor point to the sliding segment, then it is a tangency of type II. Therefore we have statement $(ix)$. If $q$ is a repeller so by the previous argument it should accumulate in a pseudo-equilibrium point and we are done. 
\end{proof}

\begin{proof}[Proof of Theorem \ref{TPB_Euzebio-Juca}]
	Let $K\subset M$ be a non-empty compact set contained on a coordinate neighborhood of $M$ and take $p\in K$ such that the positive maximal trajectory $\Gamma_Z^+(t,p)$ is contained on $K$. From Remark \ref{Remark} it is sufficient to consider $K\subset\R^2$. 	First assume that $\Gamma_Z(t,p)\notin \overline{\Sigma^e\cup\Sigma^s}$ for $t$ sufficiently large. If the orbit does not touch $\Sigma$ again then according to Fundamental Lemma $\omega(\Gamma_Z(t,p))$ is either an equilibrium, a periodic orbit or a graph of $Y$ or $X$ so we get items $(i)$, $(ii)$ and $(iii)$. Otherwise $\omega(\Gamma_Z(t,p))$ is either a crossing pseudo-cycle, a pseudo-graph, a mild pseudo cycle of type II or a tangency of type I of $Z$, so now we get (partially) items $(v)-(viii)$. On the other hand, if $\Gamma_Z(t,p)\in\overline{\Sigma^e\cup\Sigma^s}$ for $t$ sufficiently large then again according to Fundamental Lemma $\omega(\Gamma_Z(t,p))$ is either a pseudo-equilibrium of $Z$ or a tangency of type II so we get items $(iv)$ and $(viii)$.
	
	Suppose now that for any $T>0$ there exists $s,t>T$ such that $\Gamma_Z(s,p)\in\overline{\Sigma^e\cup\Sigma^s}$ and $\Gamma_Z(t,p)\notin\Sigma$. Notice that under this assumption (a) the existence of at least one tangency point is guaranteed on $\Sigma\cap K$ for $X$ or $Y$ or both and (b) when there exist two distinct tangency points they cannot be simultaneously invisible (see Subsection \ref{p+difp-}). Moreover, by hypothesis these vector fields have at most one tangency point on $\Sigma\cap K$. Then analyzing $X$ and $Y$ on $\Sigma\cap K$ we obtain four different situations:\\
	$\bullet$ 1) {\it $X$ has a tangency point and $Y$ is regular (or reciprocally)}. In this case by Lemma \ref{aff1} we have that $\omega(\Gamma_Z(t,p))$ is a tangent or a sliding pseudo-cycle contained entirely in $\Sigma^+$ or $\Sigma^-$ so we get item $(v)$ of the theorem. \\
	$\bullet$ 2) {\it $X$ and $Y$ have non coincident tangency points of opposite visibility}. In this case by Lemma \ref{aff2} the $\omega(\Gamma_Z(t,p))$ is either a crossing or tangent or sliding pseudo-cycle then we get item $(v)$.\\
	$\bullet$ 3) {\it $X$ and $Y$ have non coincident visible tangency points}. In this case by Lemma \ref{aff3} the $\omega(\Gamma_Z(t,p))$ is either a crossing or tangent or sliding pseudo-cycle therefore we get item $(v)$.\\
	$\bullet$ 4) {\it $X$ and $Y$ have coincident tangency points} In this case the $\omega(\Gamma_Z(t,p))$ is either a crossing pseudo-cycle (Lemma \ref{aff4}) or a mild pseudo-cycle of type I, II or III (Lemma \ref{aff5}) or a chaotic set of type III or it is a pseudo-graph of $Z$ (Lemma \ref{aff6}). In these cases we get items $(v)$, $(vi)$, $(ix)$ and $(vii)$, respectively.,
	The four previous cases contemplate all the objects listed in Theorem \ref{TPB_Euzebio-Juca} and so the proof is ended.
\end{proof}

As commented in Section \ref{heart} the pseudo-cycles and pseudo-graphs listed in Theorem \ref{TPB_Euzebio-Juca} may have different topological types. Indeed, the pseudo cycles obtained in case 1) of the proof are contained on $\Sigma^+$ or $\Sigma^-$ and their intersection with $\Sigma$ have only one component. On the other hand, the pseudo-cycles obtained in case 2) are  contained on either $\Sigma^+$ or $\Sigma^-$ or in both $\Sigma^\pm$ and their intersection with $\Sigma$ has again one component. The pseudo-cycles obtained in case 3) of the proof are contained on $\Sigma^+$ or $\Sigma^-$ or in both $\Sigma^\pm$. However, now the intersection of the pseudo-cycle with $\Sigma$ has one or two components. Concerning the pseudo-graphs, those obtained from Fundamental Lemma have neither sliding nor escaping segments while the pseudo-graphs from case 4) have both type of regions simultaneously. In any case, it is easy to see that those object have distinct topological types.

%
%


\begin{proof}[Proof of Theorem \ref{proprerties_of_lambda}]
	By subsection \ref{chaotic} there is a non-minimal chaotic set $\Lambda$ of type III for $Z$.  Given $p\in int(\Lambda)$ there exists $t_0,\ t_1>0$ such that $\Gamma_Z(t_0,p)=p_0$ and $\Gamma_Z(t_1,p_0)=p$. Then every point in $int(\Lambda)$ is periodic. In particular the periodic orbits of $Z$ are dense in $\Lambda$. Moreover, since there exist a dense trajectory in $\Lambda$ passing through $p_0$, it follows that there exist a dense trajectory in $\Lambda$ passing through $p\in int(\Lambda)$.  Since $\Gamma_Z(t_0,p)=p_0$ with $t_0>0$ it also follows that $\omega(\Gamma_Z(t,p))=\omega(\Gamma_Z(t,p_0))=\Lambda$ and therefore $p\in int(\Lambda)$ is a non-trivial recurrence point. Finally, for all $V$ neighborhood of $p$ and $t_0>0$ there exist $t>t_0$ such that $\Gamma_Z(t,p)\in V$, where $\Gamma_Z(t,p)$ is a dense trajectory on $\Lambda$. Therefore, $p$ is a non-trivial non wandering point.
\end{proof}

It remains to prove Proposition \ref{proposition-3-zonas} which we do next.

\begin{proof}[Proof of Proposition \ref{proposition-3-zonas}]
	First notice that $\mathit{\Lambda}$ is compact and invariant by construction. Moreover, notice that for every point $p\in\mathit{\Lambda}$ there exist a maximal trajectory $\Gamma_Z(t,p)$ passing through $(-1,0)$ in future time. Also  for each $q\in\mathit{\Lambda}$ there exist a maximal trajectory $\Gamma_Z(s,(-1,0))$ passing through $q$ in future time. Then for each $p,q\in\mathit{\Lambda}$ there exist a value $s+t>0$ such that $\Gamma_Z(s+t,p)=q$. That proves statement (2). In particular, $\Gamma$ is topologically transitive and the periodic orbits are dense on it. 
	To prove the sensitive dependence we now take $x\in\mathit{\Lambda}$ and fix $r=(1/2)diam\mathit{\Lambda}$. If $y\in\mathit{\Lambda}$ is sufficiently close to $x$ then there exist $t$ sufficiently close to $s$ such that $\Gamma_Z(t,y)=\Gamma_Z(s,x)=(-1,0)$. Then there exist positive global trajectories $\Gamma_x^+$ and $\Gamma_y^+$ passing through $x$ and $y$, respectively, satisfying $d(\Gamma_x^+(T),\Gamma_y^+(T))>r$ for some $T>0$ statement $(3)$ is proved.
	Finally, since every arbitrary point of $\Lambda$ can be connect to itself by some orbit we can perform a suitable concatenation $\widetilde{\Gamma}$ to conclude the last part of the Proposition.
\end{proof}


\section{Limit sets of some discontinuous linear vector fields}\label{linear}



Discontinuous linear vector fields have been widely studied due to its theoretical and practical importance. Moreover, this class of Filippov systems is particular interesting because one can easily find its solutions.
Regardless, in what follows we see that linear DVF not only verify the conditions of Theorem \ref{TPB_Euzebio-Juca} but the $\omega$-limit set of a maximal trajectory may be a chaotic set of type III. 

Let $Z=(X,Y)$ be a planar discontinuous linear vector field with $\Sigma=\{(0,y):\ y\in\R\}$ and 
$X_\pm(x,y)=A^\pm(x,y)^T+b^\pm$, being
$$
A^\pm = \left(\begin{array}{ccc}
a_{11}^\pm & a_{12}^\pm\\
a_{21}^\pm & a_{22}^\pm
\end{array}
\right) \quad \mbox{and} \quad b^\pm = (b_1^\pm,b_2^\pm)^T.
$$

Assume that $det(A^\pm)\neq 0$ so $X$ or $Y$ have only one equilibrium which we assume to be located outside $\Sigma$. Note that these are generic assumptions.
Also, notice that for $p=(0,y)\in\Sigma$ we have
$X_\pm. f(p)= a_{12}^\pm y+b_1^\pm$, and $X_\pm^2. f(p) = a_{12}^\pm(a_{22}^\pm y+b_2^\pm)$.
Consequently there exist at most one tangency point for $X$ or $Y$ at $p^\pm=\left(0,-b_1^\pm/a_{12}^\pm\right)$ 
if $a_{12}^\pm\neq 0$. It is easy to check that $p^\pm$ is either a fold point of $X$ or $Y$ or a tangency point of infinite order. Moreover the pseudo-equilibria are isolated. Therefore $Z$ satisfies the hypotheses of Theorem \ref{TPB_Euzebio-Juca} so the $\omega$-limit set of any maximal trajectory $\Gamma_Z(t,p)$ contained on a compact set is one of the possibilities from (i) to (ix) of Theorem \ref{TPB_Euzebio-Juca} excepted by (iii).

%

We can also provide a more detailed version of Proposition \ref{cor_chaos} in terms of the parameters of the system. Indeed, with the notations introduced previously we obtain the following result.

\begin{proposition}
	If a linear DVF $Z=(X,Y)$ has chaotic behavior then the following statements hold (simultaneously):
	\begin{itemize}
		\item[(i)] $Y$ and $X$ have coincident tangency points, that is, $a_{12}^-b_1^+-a_{12}^+b_1^-=0$;
		\item[(ii)] the double tangency is parabolic or hyperbolic, that is, $a_{12}^-b_2^--a_{22}^-b_1^-<0$ or $a_{12}^+b_2^+-a_{22}^+b_1^+>0$, respectively;
		\item[(iii)] $\s^c=\emptyset$, that is, $a_{12}^+a_{12}^-<0$.
	\end{itemize}
\end{proposition}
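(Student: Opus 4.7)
The plan is to leverage Proposition \ref{cor_chaos}, which already isolates the three structural ingredients necessary for a DVF to exhibit chaotic behavior under the hypotheses of Theorem \ref{TPB_Euzebio-Juca}. The work then reduces to converting each of those geometric conditions into an algebraic condition on the entries of $A^\pm$ and $b^\pm$ by using the explicit formulas recorded at the beginning of Section \ref{linear} for $X_\pm.f$ and $X_\pm^2.f$.

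First I would invoke Proposition \ref{cor_chaos} to extract the following three necessary conditions on $Z$: (a) there exists a parabolic or hyperbolic double tangency in $K$; (b) $\Sigma^c\cap K=\emptyset$; (c) there exists a maximal trajectory visiting both $\Sigma^s$ and $\Sigma^e$ infinitely often. The third item will not require further translation beyond ensuring consistency of the others.

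Next I would unpack each item in coordinates. Since the tangencies of $X$ and $Y$ are located at $p^\pm=(0,-b_1^\pm/a_{12}^\pm)$ (assuming, generically, $a_{12}^\pm\neq 0$), the equation $p^+=p^-$ is equivalent to $a_{12}^-b_1^+-a_{12}^+b_1^-=0$, which gives (i). For (ii), a direct computation at the common tangency yields $X_\pm^2.f(p^\pm)=a_{12}^\pm b_2^\pm-a_{22}^\pm b_1^\pm$. Under the convention that $\nabla f$ points into $\Sigma^+$, visibility of $X$ at $p^+$ reads $a_{12}^+b_2^+-a_{22}^+b_1^+>0$, while visibility of $Y$ at $p^-$ reads $a_{12}^-b_2^--a_{22}^-b_1^-<0$. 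The double tangency fails to be elliptic precisely when at least one of the two fields is visible, and after discarding the degenerate equalities this is exactly the disjunction stated in (ii). Finally, letting $y^\ast=-b_1^\pm/a_{12}^\pm$ denote the common tangency height, one has $X_\pm.f(0,y)=a_{12}^\pm(y-y^\ast)$, so $(X.f)(Y.f)<0$ holds on both sides of $y^\ast$ if and only if $a_{12}^+a_{12}^-<0$; this is equivalent to $\Sigma\cap K\subset\overline{\Sigma^s\cup\Sigma^e}$, i.e., to (iii).

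The main steps are not hard individually; the delicate point will be keeping the sign conventions consistent: which Lie derivative inequality corresponds to visibility for $X$ versus for $Y$, and the fact that ``parabolic or hyperbolic'' is the set-theoretic complement of ``elliptic'' (once the first-order vanishing already enforced by (i) is taken into account), so it simplifies to the disjunction appearing in the statement rather than to a conjunction. The rest of the argument is linear-algebra bookkeeping using the formulas for $X_\pm.f$ and $X_\pm^2.f$.
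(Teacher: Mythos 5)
Your proposal is correct and follows exactly the route the paper intends: the paper presents this proposition as an immediate ``more detailed version of Proposition \ref{cor_chaos} in terms of the parameters'' and omits the proof, so the content is precisely the translation you carry out using $X_\pm.f(0,y)=a_{12}^\pm y+b_1^\pm$ and $X_\pm^2.f(p^\pm)=a_{12}^\pm b_2^\pm-a_{22}^\pm b_1^\pm$. Your sign bookkeeping (visibility as $X^2.f>0$ for the upper field and $Y^2.f<0$ for the lower one, ``parabolic or hyperbolic'' as the complement of elliptic, and $a_{12}^+a_{12}^-<0$ from the sign of $(X.f)(Y.f)$ off the common tangency) all checks out.
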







To exemplify the richness of discontinuous linear vector fields we now consider $Z=(X,Y)$ separated by the straight line $\Sigma=\{(0,y):y\in\R\}$, where 
$$
X(x,y)=\left(-\frac{1}{2}x-y-1,x+\frac{1}{2}y+2\right)\ \ \mbox{and}
$$
$$
Y(x,y)=(x+y+1,-2x-y-2).
$$

The equilibrium points are centers located at $(-2,0)$ for $X$ and at $(-1,0)$ for $Y$. Setting $\Sigma=f^{-1}(0)$ with $f(x,y)=x$ we get $Xf(0,y)=-y-1$ and $Y(0,y)=y+1$. So both vector fields 
$X$ and $Y$ have tangency points at $(0,-1)$ which is visible for $X$ and invisible for $Y$. Since $Xf(0,1)<0$ and 
$Yf(0,1)>0$ it follows that $\Sigma^e=(-1,+\infty)$ and $\Sigma^s=(-\infty,-1)$. A simple calculation shows that the Filippov vector field is given by $\dot{x}=0, \dot{y}=-y/4$. Therefore $(0,0)$ is the unique pseudo-equilibrium point. Moreover, it can be extended beyond the tangency point $(0,-1)$ which in this case becomes an attractor equilibrium point for the extended Filippov system (see Figure \ref{fig1-10}). Now we construct the chaotic set of type III as follow: let $\widetilde{\Lambda}\subset\R^2$ be the subset delimited by the following curves
\begin{itemize}
	\item[$\g_1:$] the orbit of $X$ connecting the pseudo-equilibrium $(0,0)$ to $\s^s$;
	\item[$\g_2:$] the orbit of $Y$ connecting $(0,0)$ to $\s^s$;
	\item[$\g_3:$] the periodic orbit passing through the tangency point $(0,-1)$ according to $X$.
\end{itemize}

\begin{center} 
	\includegraphics[scale=1.4]{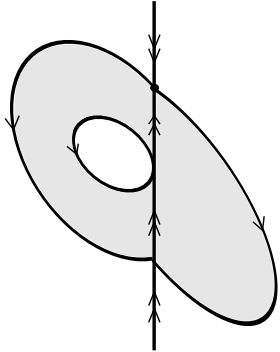}
	\captionof{figure}{A positively minimal set realized by a linear DVF.}\label{fig1-10}
\end{center}

That lead us to the following result.

\begin{proposition}
	$\widetilde{\Lambda}$ is a chaotic set of type III and it is a positively minimal set.
\end{proposition}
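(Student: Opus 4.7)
My plan is to realize the claim as an instance of Lemma \ref{aff6} for the chaos part, and to establish positive minimality by a hub-reachability argument modelled on the proof of Proposition \ref{proposition-3-zonas}.

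First, I would verify the hypotheses of Lemma \ref{aff6}. The linear system satisfies $(K_i)$ and $(Z_j)$ on a suitable compact neighborhood $K$ of $\widetilde{\Lambda}$ (as already recorded in the preamble to this section), the tangency $(0,-1)$ is common to $X$ and $Y$, and a direct computation of the Lie derivatives $X^2 f(0,-1)$ and $Y^2 f(0,-1)$ combined with the visibility criterion of Subsection \ref{PSVF} shows that $(0,-1)$ is parabolic. The explicit Filippov field $\dot y=-y/4$, non-vanishing at $y=-1$, extends across the tangency and certifies that any positive trajectory which returns to the tangency visits both the escaping and the sliding regions infinitely often, so hypothesis $H_1$ holds; $H_2$ is automatic since both tangencies are of order two. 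I would then compare the step-by-step construction of the chaotic set $\Lambda$ in the proof of Lemma \ref{aff6} with the three bounding curves $\gamma_1,\gamma_2,\gamma_3$: the $X$-arc from $(0,0)$ to $\Sigma^s$ and the $Y$-arc from $(0,0)$ to $\Sigma^s$ are exactly the outer boundaries for the $\Sigma^+$- and $\Sigma^-$-portions of $\Lambda$, and the periodic orbit through the tangency realizes the inner boundary in $\Sigma^-$. This identifies $\widetilde{\Lambda}$ with the chaotic set of type III produced by the lemma, giving the first half of the proposition.

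Second, to prove positive minimality I would follow the template of Proposition \ref{proposition-3-zonas}. Compactness is obvious, and positive invariance follows because each of $\gamma_1,\gamma_2,\gamma_3$ is a trajectory-arc of $X$ or $Y$ (hence cannot be crossed transversely) while the remaining boundary lies along $\Sigma$ and is tangential to the extended Filippov flow. For density of a forward orbit from an arbitrary $p\in\widetilde{\Lambda}$ it is enough to establish two reachability statements: (a) every $p\in\widetilde{\Lambda}$ admits a positive maximal trajectory of $Z$ reaching the tangency $(0,-1)$ in finite time; and (b) from $(0,-1)$ every $q\in\widetilde{\Lambda}$ is accessible in finite time by a suitable choice among the branches available at the tangency ($X$-arc, $Y$-arc or extended Filippov slide). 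Combining (a) and (b), exactly as in the 3-zone proposition, and concatenating passages through $(0,-1)$ against a countable dense sequence $\{q_n\}\subset\widetilde{\Lambda}$, produces a positive trajectory from $p$ that passes within distance $1/n$ of each $q_n$ and is therefore dense. Sensitive dependence and density of periodic orbits follow from the total non-uniqueness at the tangency exactly as in the last paragraph of the proof of Proposition \ref{proposition-3-zonas}.

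The main obstacle is step (a): the pseudo-equilibrium $(0,0)$ is an attractor of the Filippov flow in the sliding region, so a careless continuation drives a generic trajectory in $\widetilde{\Lambda}\cap\Sigma^s$ into $(0,0)$ and destroys recurrence. The argument must therefore exploit the non-uniqueness at the tangency to branch onto $\gamma_3$ (or onto a nearby arc of $X$ or $Y$ entering $\widetilde{\Lambda}$) before the trajectory is trapped, and this branch choice must be iterated indefinitely while remaining in $\widetilde{\Lambda}$. Making the admissible concatenations explicit, together with a verification that the closure of their union exhausts $\widetilde{\Lambda}$, relies on the nested elliptic foliations of $X$ around $(-2,0)$ and of $Y$ around $(-1,0)$ inside $\widetilde{\Lambda}$; this is where the linear structure of the example is indispensable and constitutes the technical core of the proof.
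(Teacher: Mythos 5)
Your proposal is correct, and the chaos half coincides with the paper's (the paper simply cites Step 4 of Lemma \ref{aff6}; your extra verification that the double tangency at $(0,-1)$ is parabolic and that $H_1$, $H_2$ hold is the implicit content of that citation). For positive minimality, however, you take a genuinely different route. The paper argues by excluding proper compact positively invariant subsets through a dichotomy: for $p\in int(\widetilde{\Lambda})$ it observes that no forward branch ever reaches the pseudo-equilibrium $(0,0)$ (it is only approached asymptotically along the extended sliding segment), while for $q\in\partial\widetilde{\Lambda}$ the forward saturation of $q$ is all of $\widetilde{\Lambda}$ up to a boundary segment. You instead run the hub-reachability scheme of Proposition \ref{proposition-3-zonas} with the tangency $(0,-1)$ as hub, producing a single dense forward orbit from every point by concatenation; since positive invariance of a candidate subset forces it to contain every forward branch, this is a stronger conclusion that immediately yields minimality, and it makes the paper's somewhat terse interior-point clause unnecessary. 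One remark on your flagged ``main obstacle'': it is less serious than you suggest. The sliding flow $\dot y=-y/4$ on $\Sigma^s=\{y<-1\}$ runs \emph{toward} the tangency and reaches it in finite time, whereas $(0,0)$ lies in $\Sigma^e$ beyond the tangency and is only an asymptotic limit of the extended sliding branch; moreover, for minimality you only need \emph{some} forward branch from each point to reach the hub, and the non-uniqueness on the escaping segment guarantees you may always branch off via $X$ or $Y$ before approaching $(0,0)$. So step (a) is essentially immediate, and the genuine technical content sits in step (b), where the nested elliptic foliations you mention do the work.
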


\begin{proof}
	The fact that $\widetilde{\Lambda}$ is a chaotic set of type III follows from step 4 in the proof of the Lemma \ref{aff6}. Besides, $\widetilde{\Lambda}$ is clearly a non empty, compact and positively invariant set. In order to see that $\widetilde{\Lambda}$ is indeed minimal, we notice that if $p\in int(\Lambda)$ then no positive maximal trajectory $\g^+_Z(t,p)$ reaches the pseudo-equilibrium $(0,0)$ and therefore $\g_Z^+(t_0,p)$ is contained on the interior of $\Lambda$. On the other hand, if $q\in\partial\Lambda$ then the saturation of $q$ in future time is the whole $\Lambda$ except possibly by a segment on its boundary. Thus $\widetilde{\Lambda}$ cannot contain any non empty, compact and positively invariant proper subset, that is, $\widetilde{\Lambda}$ is a positively minimal set.
\end{proof}



\section{Conclusions}\label{final}

\medskip

In this paper we studied some objects emerging from the theory of DVF as $\omega$-limit of a maximal trajectory $\g_Z(t,p)$ on two-dimensional Riemannian manifolds. In particular we introduce new objects which actually are a sophistication of some known concepts. More precisely, we distinguish pseudo-cycles from mild pseudo-cycles and we study three different types of chaotic sets. Such a refinement is necessary once structural unstable situations may occur as $\omega$-limit even in simple contexts as the linear one. In par\-ti\-cu\-lar for the linear case, we provide some classes for which the main results of the paper applies.


We also observed the absence of nontrivial minimal sets under the hypotheses of Theorem \ref{TPB_Euzebio-Juca} because generally invariance cannot be guaranteed unless we assume (i) more tangency points (consequently more sliding and escaping regions) or (ii) more regions defining the DVF. Besides that, we identified the existence of {\it orientable} minimality and chaos.




It is important to highlight that the global study performed throughout this paper allow us to identify not only $\omega-$limit sets but also the own structure of trajectories in a more embracing scenario. For instance, besides the facts concerning minimal sets, chaos and transitivity, we detect the existence of homoclinic and heteroclinic connections, several different types of limit sets (crossing, tangential or having sliding) as well new objects as tangency points of type II where trajectories may also accumulate.

Finally we call the attention to a potential application of the global analyzes performed in this paper addressing bifurcation theory. Indeed, let $Z=(X,Y)$ be a DVF such that each vector fields $X$ and $Y$ contribute with one tangency point of even order, $p^-$ and $p^+$ with $p^-\neq p^+$. Assume that these tangency points are connected through a maximal trajectory. This trajectory is a tangential pseudo cycle having two tangent points on it and a pseudo-equilibrium of saddle type inside (see Figure \ref{fig1-9}). One can study the bifurcation of sliding limit cycles of different topological types by breaking the fold connections. Indeed, the limit cycles may occupy one or two zones as well as they can be formed by one or two arcs of sliding. The same occurs in other configurations studied in the paper. To understand such bifurcations and to provide their unfolding is a hard task that have been under explored in the literature despite the exhaustive number of phenomena modeled by DVF.

%


%

\begin{center}
	\includegraphics[scale=0.85]{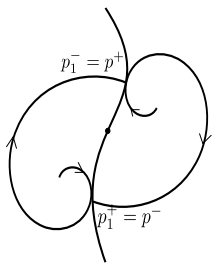}
	\captionof{figure}{Two fold visible-visible connection.}\label{fig1-9}			
\end{center}

\vspace{.5cm}

\noindent {\textbf{Acknowledgments.} This document is the result of the research
	projects funded by Pronex/ FAPEG/CNPq  grant 2012 10 26 7000 803 and grant 2017 10 26 7000 508 (Euzébio), Capes grant 88881.068462/2014-01 (Euzébio and Jucá), Universal/CNPq grant 420858/2016-4 (Euzébio) and CAPES Programa de Demanda Social - DS (Jucá).}





\end{multicols}

\end{document}